\theoremstyle{plain}
\newtheorem{thm}{Theorem}[section]
\newtheorem{lem}[thm]{Lemma}
\theoremstyle{definition}
\newtheorem{cdef}{Definition}[section]
\theoremstyle{remark}
\let\oldlemma\lem
\renewcommand{\lem}{%
  \crefalias{thm}{lem}% Theorem counter now looks like Lemma
  \oldlemma}
\Crefname{lem}{Lemma}{Lemmas}
\let\olddefn\defn
\renewcommand{\defn}{%
  \crefalias{thm}{defn}% Theorem counter now looks like Definition
  \olddefn}
\Crefname{defn}{Definition}{Definitions}
\let\oldcdef\cdef
\renewcommand{\cdef}{%
  \crefalias{thm}{cdef}% Theorem counter now looks like Definition
  \oldcdef}
\Crefname{cdef}{Definition}{Definitions}
\let\oldrem\rem
\renewcommand{\rem}{%
  \crefalias{thm}{rem}% Theorem counter now looks like Remark
  \oldrem}
\Crefname{rem}{Remark}{Remarks}
\let\oldcor\cor
\renewcommand{\cor}{%
  \crefalias{thm}{cor}% Theorem counter now looks like Corollary
  \oldcor}
\Crefname{cor}{Corollary}{Corollaries}
\let\oldclaim\claim
\renewcommand{\claim}{%
  \crefalias{thm}{claim}% Theorem counter now looks like Claim
  \oldclaim}
\Crefname{claim}{Claim}{Claims}
\let\oldprop\prop
\renewcommand{\prop}{%
  \crefalias{thm}{prop}% Theorem counter now looks like Prop
  \oldprop}
\Crefname{prop}{Proposition}{Propositions}
\let\oldcon\con
\renewcommand{\con}{%
  \crefalias{thm}{con}% Theorem counter now looks like Con
  \oldcon}
\Crefname{con}{Conjecture}{Conjectures}
\definecolor{darkgrn}{rgb}{0, 0.8, 0}
\definecolor{maroon}{rgb}{0.85, 0.0, 0.1}
\newcommand{\vF}{ F }
\newcommand{\vb}{ \mathbf{b} }
\newcommand{\vB}{ \mathbf{B} }
\newcommand{\ve}{ \mathbf{e} }
\newcommand{\vl}{ \mathbf{l} }
\newcommand{\vu}{ \mathbf{u} }
\newcommand{\vx}{ \mathbf{x} }
\newcommand{\vy}{ \mathbf{y} }
\newcommand{\vz}{ \mathbf{z} }
\newcommand{\vzero}{ \mathbf{0} }
\newcommand{\vone}{ \mathbf{1} }
\newcommand{\vlambda}{ \boldsymbol{\lambda} }
\newcommand{\rz}{\mathrm{z}}
\newcommand{\lam}{\lambda}
\newcommand{\R}{\mathbb{R}}
\newcommand{\Int}{\operatorname{Int}}
\newcommand{\Tr}{\operatorname{Trace}}
\newcommand{\rank}{\operatorname{rank}}
\DeclarePairedDelimiter{\norm}{\lVert}{\rVert}
\newcommand{\p}{\phantom{-}}
\renewcommand{\Re}{\operatorname{Re}}
\renewcommand{\Im}{\operatorname{Im}}
\title{\vspace*{-0.45in}
  Robust Feasibility of Systems of Quadratic Equations Using Topological Degree Theory}
\author[1]{Krishnamurthy Dvijotham}
\author[2]{Bala Krishnamoorthy\thanks{kbala@wsu.edu}}
\author[3]{Yunqi Luo}
\author[4]{Benjamin Rapone\thanks{Authors are listed alphabetically.}}
\affil[1]{Google DeepMind, USA}
\affil[2]{Wahington State University, USA}
\affil[3]{International Institute of Finance, School of Management, University of Science and Technology of China}
\affil[4]{Washington State Governor's Office, USA}
\date{}
\begin{document}

\maketitle

\vspace*{-0.3in}
\begin{abstract}
  We consider the problem of measuring the \emph{margin of robust feasibility} of solutions to a system of nonlinear equations.
  We study the special case of a system of quadratic equations, which shows up in many practical applications such as the power grid and other infrastructure networks. 
  This problem is a generalization of quadratically constrained quadratic programming (QCQP), which is NP-Hard in the general setting.
  We develop approaches based on topological degree theory to estimate bounds on the robustness margin of such systems.
  Our methods use tools from convex analysis and optimization theory to cast the problems of checking the conditions for robust feasibility as a nonlinear optimization problem.
  We then develop \emph{inner bound} and \emph{outer bound} procedures for this optimization problem, which could be solved efficiently to derive lower and upper bounds, respectively, for the margin of robust feasibility.
  We evaluate our approach numerically on standard instances taken from the MATPOWER and NESTA databases of AC power flow equations that describe the steady state of the power grid.
  The results demonstrate that our approach can produce tight lower and upper bounds on the margin of robust feasibility for such instances.
\end{abstract}

\section{Introduction} \label{sec:intro}  

Solving systems of equations is ubiquitous in computational mathematics.
In many applications, these problems are made challenging due to the functions in the equations being nonlinear and/or nonconvex.
Another aspect adding to the problem complexity is the uncertainty in the problem parameters.
Our work is motivated by two central computations performed as part of power systems operations are power flow (PF) studies and optimal power flow (OPF).
PF studies ensure the power grid state (i.e., voltages and flows across the network) will remain within acceptable limits in spite of contingencies (e.g., loss of a generator or transmission line) and other uncertainties (e.g., shifting demand or renewable sources of power).
OPF seeks further to choose values for controllable assets in the system (e.g., generators whose rate of power production could be controlled) so as to meet demand at minimum cost.
These problems have inherent nonlinearities and nonconvexities, making them hard to solve in their general form.
  
To further complicate the problem, the rapid adoption of renewable energy sources such as wind and solar energy is adding unprecedented uncertainties to modern power systems.
Since these sources depend on the weather, their energy output is not perfectly controllable.
In fact, this output can be forecasted with only limited accuracy.
While demand-side flexibility can be used to balance fluctuations in solar and wind generation, its amount can in turn be difficult to predict \cite{mathieu2011examining,taylor2015uncertainty}.
Due to all these uncertainties, it is increasingly difficult to ensure there is sufficient power generation to meet demand while accounting for losses and network limits.

\medskip
We study quadratic systems of equations with parameters, and take a \emph{robust viewpoint} of uncertainty.
Specifically, we aim to quantify the worst-case impact of uncertainty in parameters on feasibility.
To this end, we study the \emph{robust feasibility problem}, which includes the robust version of the standard PF problem as a special case.
The power system can be described by a system of nonlinear equations in a set of variables that capture the state of the power grid, i.e., voltages at every point in the power network, and include the controllable inputs as well as uncertain inputs.
In the main PF problem, we are given a fixed value of the controllable inputs and an uncertainty set for the uncertain inputs.
The goal of the robust feasibility problem is to characterize whether the system has a solution within specified bounds (capturing engineering limits on voltages, flows, etc.) for {\em each} choice of the uncertain inputs in the uncertainty set.

\medskip
More concretely, we study a system of quadratic equations $F(\vx)=\vu$ where $F: \R^n \mapsto \R^n$ is quadratic in $\vx$ for $\vx,\vu \in \R^n$.
  We consider situations where the parameters $\vu$ are uncertain, and we are interested in guaranteeing the existence of a solution to $F(\vx) = \vu$ within limits on $\vx$ and $\vu$.
We draw on results from topological degree theory and Borsuk's theorem from algebraic topology and nonlinear analysis to develop tests for existence of solutions.
Using ideas from optimization such as convex relaxations of quadratic constraints, we develop rigorous and efficient algorithms based on these tests for robust feasibility.
We develop efficient implementations of these algorithms capable of scalably solving large instances of PF problems.
While we use power systems as the main application area, the methods we develop are fairly general, and could be applied to problems in other domains as well, e.g., stochastic processes and gas distribution networks.

\subsection{Our Contributions}
  We study systems of quadratic equations, and define a \emph{robustness margin} as a measure of the system's robust feasibility (see \cref{RobustDef}).
  We develop approaches based on topological degree theory to estimate bounds on the robustness margin of such systems (see \cref{sec:theory}).
  We use tools from convex analysis and optimization theory to cast the problem of checking the conditions for robust feasibility as a nonlinear optimization problem.
  We then develop \emph{inner bound} (\cref{sec:inbdform}) and \emph{outer bound} (\cref{sec:outbdform}) formulations for this optimization problem, which could be solved efficiently to derive lower and upper bounds, respectively, for the margin of robust feasibility.
  We evaluate our approach numerically on standard instances taken from the MatPower database of AC power flow equations that describe the steady state of the power grid (\cref{sec:numstd}).
  The results demonstrate that our approach can produce tight lower and upper bounds on the robustness margin for such instances.

\subsection{Related Work}

Robust feasibility and optimization have been well-studied by both the optimization and topology communities. 
What is lacking is an approach that can guarantee and quantify robust feasibility on large scale systems in an efficient manner. 
In this article we address this deficiency by developing theory that utilizes results from topological degree theory and convex optimization. 
We provide a theoretical foundation for determining robust feasibility of systems of quadratic equations and computational methods for producing lower and upper bounds on the maximum error bound for which one can guarantee robust solvability (the radius of robust solvability). 
To highlight the efficacy of our approach we derive procedures, which we test numerically on several quadratic systems constructed from the AC power flow equations that describe the steady state of the power grid with added uncertainty. 
The results show that our approach can be applied to large scale systems to produce tight lower and upper bounds on the radius of robust solvability, which we shall define as the robustness margin of the system.

In optimization, the focus has been on robust \emph{convex} optimization where uncertainty sets are specified for the parameters of a convex optimization problem (typically an LP or conic program) \cite{ben2009robust}, while the robust versions of generic polynomial programming problem are related by a hierarchy of SDP relaxations \cite{Lasserre2006,Lasserre2011}.
Robust \emph{nonconvex} optimization has received only limited attention (a notable exception is the work of Bertsimas et al.~\cite{BeNoTe2010}).
These approaches do not provide rigorous guarantees for robust feasibility with nonconvex constraints.

In algebraic topology, there have been a number of studies on these problems based on several approaches, including ones based on robustness of level sets and persistent homology \cite{BeEdMoPa2010,EdMoPa2011}, well groups and diagrams \cite{ChSkPa2012,FrKr2016well,FrKr2016pers}, topological degree and robust satisfiability \cite{FrKr2015,FrKrWa2016},  and on Borsuk's theorem and interval arithmetic \cite{FrRa2015,FrHoLa2007,FrLa2005}.
While the theory developed by these approaches is fairly complete, the associated algorithms typically rely on explicit simplicial or cellular decompositions of the problem space.
But the size of such decompositions typically grows exponentially in the problem dimension, and hence these algorithms are typically impractical for large-scale applications.

Looking specifically at applications such as the power systems, there has been significant interest in solving the non-robust version of the OPF problem to global optimality.
The driver has been the development of strong convex relaxations of the nonconvex optimization problems combined with ideas from global optimization such as spatial branch-and-cut, bound tightening, etc.~\cite{BiMu2016,coffrin2015strengthening}.
Uncertainty has been handled in a chance-constrained framework \cite{BiChHa2014,zhang2011chance}.
However, this approach has typically been applied only to linear approximations or convex relaxations of the AC power flow equations, and does not guarantee feasibility with respect to the true nonlinear power flow equations \cite{BiChHa2014,kocuk2016strong,RoVrOlAn2015,TsBiTa2016}.

There is significant empirical work on solving the PF equations with probabilistic uncertainty \cite{morales2007point,wang1992interval} and specifying conditions on the power injections over which the power flow equations are guaranteed to have a solution \cite{bolognani2016existence,EPFLA,EPFLB}.
However, many of these algorithms are based on sampling heuristics and either do not offer mathematical guarantees of robust feasibility or do not directly address the robust feasibility problem.
More recently, Dvijotham, Nguyen, and Turitsyn \cite{DjTuritsyn} developed an approach to handle uncertainty which produced inner/lower bounds on the distance from the nominal values of the uncertain parameters for which the system can still be guaranteed to have solutions.
This approach closely aligns with the methods describing our inner bound procedures, and further can produce a certificate of tightness under special conditions. 
However, this method depends critically on the choice of norms, which is not straightforward to make.

%\bigskip
%\vfill

\paragraph{Notation:}
We denote vectors by bold lowercase letters, e.g., $\vx, \vu \in \R^n$, and matrices by upper case letters, e.g., $A \in \R^{m \times n}$.
The vectors of all zeros and all ones are denoted $\vzero$ and $\vone$, respectively.
Individual entries in a vector $\vx$ are denoted $x_i$, for instance.
We let $[n]$ denote the numbers $\{1,2,\dots,n\}$.
%\delete{Add anything more here?}

\section{Problem Formulation} \label{sec:probform}  

We study systems of quadratic equations of the form
\begin{align}
& Q(\vx)+L\vx=\vu\label{eq:Quad}
\end{align}
where $Q: \mathbb{R}^n \mapsto \mathbb{R}^n$ is a vector-valued quadratic function, that is, there exist matrices $Q_1,\ldots,Q_n $ $\in \mathbb{R}^{n\times n}$, such that
\[[Q(\vx)]_i = \vx^T Q_{i} \vx \quad \forall i \in [n]\]
and $L \in \mathbb{R}^{n\times n}$, $\vu \in \mathbb{R}^n$. 
We are interested in solutions to this system of equations under linear constraints of the form
\begin{align}
(A\vx)_i\leq b_i \quad \forall i \in [n]\label{eq:xLimits}
\end{align}
where we assume that $(A\vx)_i \leq b_i$ for each $i$ is free of redundant constraints and $\vx$ and $\vu$ have the same dimension.
However, the parameter $\vu$ is uncertain and known only up to certain error bounds:
\begin{align}
%u^{\min}_i=u_i^*-e_i \leq u_i \leq =u_i^*+e_i=u^{\max}_i \quad \forall i \in [n] \label{eq:uLimits}
u^{\min}_i=u_i^*-e_i \leq u_i \leq u_i^*+e_i=u^{\max}_i \quad \forall i \in [n] \label{eq:uLimits}
\end{align}
where $\vu^*$ is a forecast for $\vu$ and $\ve$ denotes the error bounds associated with the forecast. 
For example, in the case of quadratic equations appearing in infrastructure networks like the power grid, $u_i$ represents uncertain power generation or consumption (for example uncertain weather-dependent power sources like solar or wind power). 
In the case of stochastic processes, $\vu^*$ represents an initial state distribution. Further, note that if the polyhedron given by $A\vx \leq \vb$ is not full dimensional, then there would exist at least a $i\in[1,\dotsb, n]$ such that $x_i$ could be any number as long as satisfies \eqref{eq:Quad}. In this case,for the corresponding uncertain parameter $u_i$, there are no $u_i^{min}$ and $u_i^{\max}$.

\begin{cdef}[Robust Feasibility and Robustness Margin problem]  \label{RobustDef}
  Determine whether for all values of $\vu$ satisfying \eqref{eq:uLimits}, the system of equations \eqref{eq:Quad} has a solution lying within the interior of the set of all  $\vx$ satisfying the constraints in \eqref{eq:xLimits}. 
  If this is true, the system comprised of \eqref{eq:Quad},\eqref{eq:xLimits},\eqref{eq:uLimits} is said to be \emph{robust feasible}. 
  The largest $r$ for which $e_i\geq r \ \forall i~$ with $~e_i>0$ in \eqref{eq:uLimits} and such that the system is robust feasible is defined as the \emph{robustness margin}. 
  See \cref{fig:RobFeas(r)} for a pictorial depiction. 
\end{cdef}

\begin{figure}[htp!]
\begin{center}
  \includegraphics[width=\textwidth]{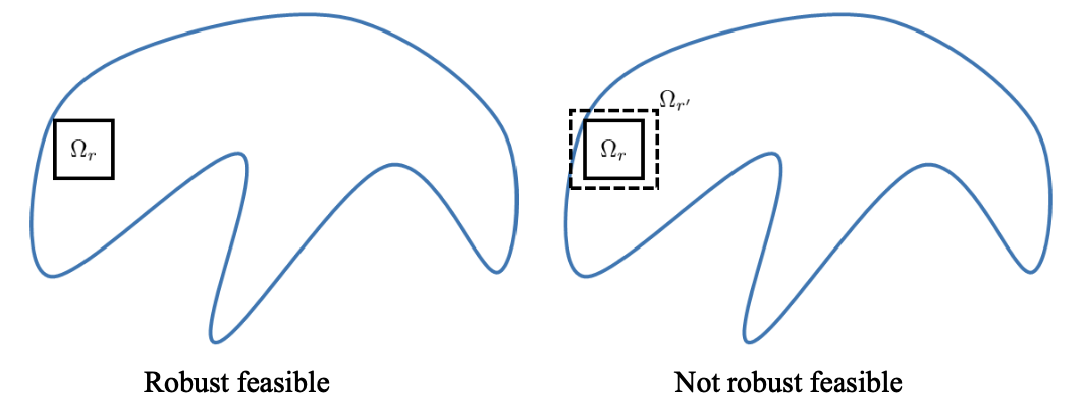}
\end{center}
\caption{Illustration of robust feasibility.
The system is robust feasible at the level of $r$ (left), but is not robust feasible at $r' > r$ (right).}
\label{fig:RobFeas(r)}
\end{figure}

There are many practical uncertainty sets developed in previous studies such as ellipsoidal uncertainty \cite{ben1999robust}, cardinality-constrained uncertainty \cite{bertsimas2004robust}, norm uncertainty \cite{bertsimas2004price}, and some other types of uncertainty sets constructed based on probability theory \cite{bandi2012tractable,bandi2014optimal} and data driven approaches \cite{lotfi2022data}.
  We have presented the definition for robustness margin in the most general form so as to capture most scenarios that may be considered by the researcher.
  Note that for a given choice of $r$, our uncertainty set specified as the $\ell_{\infty}$-ball of radius $r$ forms the largest such set---many other commonly used uncertainty sets, e.g., $\ell_1$-ball, $\ell_2$-ball, ellipsoid (with major axis $r$), are subsets of the $\ell_\infty$-ball.
For instance it very well may be the case that only some of the dimensions of $\vu$ will have margins of uncertainty. Furthermore, one may have need of computing the robustness margin for only a subset of the dimensions of $\vu$ which pertain to problem areas or nodes of particular interest to the research.This manual restriction will of course produce a robustness margin greater than or equal to that obtained by considering all dimensions, which certainly remains an option under the current setting.

\section{Theoretical Results} \label{sec:theory}  
We now describe the main technical results of this paper. 
In the first subsection we describe the setting under which the problem can be solved using the results which follow. 
\subsection{Topological Degree Theory}
Our results take advantage of the well studied area of topological degree theory. 
For an introduction to topological degree theory see the works of \cite{fonseca1995degree}, \cite{MoVrYa2002}, and \cite{OrChCh2006}.
It suffices to say that should $\Omega\subset\R^{n}$ be open and bounded, $F:\Omega\rightarrow \R$ continuous, differentiable, and $F(\vx) \neq \vy~\forall \vx\in\partial\Omega$ for some $\vy\in\R^n$, then the degree of $F$ at $\vy$ over $\Omega$, denoted $d\left(\Omega,F,\vy\right)\in\mathbb{Z}$, is defined. 
For the purposes of this article we utilize the following property of degree as our definition of the topological degree of a function $F$ at $\vy$ over a set $\Omega$. 
See O'Regan et al.~\cite{OrChCh2006} for details.
\begin{equation}\label{eq:Deg3}
  d\left(\Omega,F,\vy\right)=\sum\limits_{\vx\in F^{-1}(\vy)}\operatorname{sign}\left(J_F(\vx)\right)
\end{equation}
where $\operatorname{sign}\left(J_F(\vx)\right)$ denotes the sign of the determinant of the Jacobian of $F$ at $\vx$, i.e.,
\[\operatorname{sign}\left(J_F(\vx)\right)=   \left\{
\begin{array}{ll}
       \ -1   & \mbox{if } J_F(\vx)< 0, \\
      \quad 0 & \mbox{if } J_F(\vx)= 0,~\mbox{ and } \\
      \quad 1 & \mbox{if } J_F(\vx)> 0. \\
\end{array} 
\right. \]
We assume that the sum in \cref{eq:Deg3} evaluates to $0$ if $F^{-1}(\vy) = \emptyset$.
Additionally we utilize the following common properties of the topological degree. 
Again, see O'Regan et al.~\cite{OrChCh2006} for details. 

If $H : [0,1]\times\bar{\Omega}\rightarrow\R^n$ is continuous such that $H(t,\vx)\neq \vy~\forall t\in[0,1],~\vx\in\partial\Omega$,  then 
\begin{equation}\label{eq:Deg1} 
  d\left(\Omega,H(t,\cdot),\vy\right)\text{ does not depend on }t.
\end{equation}

\begin{equation}\label{eq:Deg2}
  \text{If } d(\Omega,F,\vy) \neq 0, \text{ then there exists } \vx \in \Omega \text{ such that } F(\vx)=\vy. 
\end{equation}

\subsection{New Theoretical Results}
In this section we will take full advantage of properties (\ref{eq:Deg3}), (\ref{eq:Deg1}), and (\ref{eq:Deg2}) as they apply to the Robust Feasibility Problem.
We begin by assuming there is a unique solution to the forecasted system at which point the Jacobian is non-zero.
We conclude by property (\ref{eq:Deg3}) that the degree is non-zero at $\vu^*$ for the forecasted system. 
We then utilize property (\ref{eq:Deg1}) to equate the degree of $\vu$ to the degree of $\vu^*$ for all $\vu$ satisfying the limits specified in Equation (\ref{eq:uLimits}) (under a proposed robustness margin), which by property (\ref{eq:Deg2}) allows us to guarantee solutions to the system under all realizations of $\vu$ satisfying the limits (in \ref{eq:uLimits}), i.e., verify the system is robust feasible for a given robustness margin.
Invoking property (\ref{eq:Deg1}), however, requires us to develop a homotopy that captures the system under all possible realizations of $\vu$ satisfying the limits in (\ref{eq:uLimits}).
Once we define such a homotopy we reduce the Robust Feasibility Problem to the problem of verifying the hypothesis of property (\ref{eq:Deg1}). 

\medskip
To that end let $F(\vx)=Q(\vx)+L\vx$, $\Omega=\{\vx| A\vx \leq \vb\}$ and $\hat{\vx}\in \Int(\Omega)$ be a solution to the forecasted system $F(\vx)=\vu^*$ given in \cref{eq:Quad}, such that $\operatorname{sign}\left(J_{F}(\hat{\vx})\right) \neq 0$.
For a review of efficient methods of verification that could be used here, see the work of Griewank \cite{GRIEWANK2014}. 
If no solution exists, then certainly the system is not robust feasible.
We define $\Omega_u=\{\vu \,|\,\vu \text{ satisfies limits in \cref{eq:uLimits}}\}$.
Our task is then to verify using existing methods or those we propose in this paper that no other solutions exist in $\Int(\Omega)$.
This step may require further restricting the domain or even a slight perturbation of the forecasted $\vu$. 
Thus by property (\ref{eq:Deg3}) we have verified that $d\left(\Omega, F(\vx), \vu^*\right)\neq 0$. 
Note that this is not the only method for verification, but in some sense is the easiest to carry out for our purposes. 

\medskip
We now introduce the homotopy we use to invoke property (\ref{eq:Deg1}).
Let  $\ell_{\vu^*}$ represent an arbitrary line passing through $\vu^*$ and let $\vl_{\min}$ and $\vl_{\max}$ be the two points of intersection of $\partial\Omega_u$ and $\ell_{\vu^*}$.
We define a homotopy $H_{\ell_{\vu^*}} : [0,1]\times\bar{\Omega}\rightarrow\R^n$ as 
\begin{align}
  H_{\ell_{\vu^*}}(t,\vx) = F(\vx)-\left[(1-t)\vl_{\min}+t\vl_{\max}\right]\,. \label{eq:Homo}
\end{align}
Based on this homotopy, we present the key result on verification of robust solvability problem.
%in \cref{lem:NaScondition}.
%
\begin{lem}
  \label{lem:NaScondition}
  Let $\Omega=\{\vx| A\vx \leq \vb \}$, $\Omega_u=\{\vu \,|\,\vu\, \text{satisfies limits in \cref{eq:uLimits}}\}$ and $F(\vx)=Q(\vx)+L\vx$,  as described in Equations (\ref{eq:Quad}), (\ref{eq:xLimits}), and (\ref{eq:uLimits}). 
  If $d(\Omega,H_\ell\left(\frac{1}{2},\vx\right), \vzero) \neq 0$ for each choice of $\ell = \ell_{\vu^*}$, then the system is robust solvable if and only if the following statement holds:
  \begin{align}
    \not\exists \vx\in\partial\Omega, \vu\in\Omega_u \ \ \mbox{ such that } \ F(\vx)-\vu=\vzero \,. \label{eq:RSForm}
  \end{align}
\end{lem}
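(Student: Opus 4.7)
My plan is to handle the two directions of the iff separately, with the backward direction carrying the constructive content and the forward direction following by a contrapositive continuation argument; both arguments use the homotopy $H_\ell$ together with the listed properties (\ref{eq:Deg3})--(\ref{eq:Deg2}) of the topological degree.

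For the backward direction ($\Leftarrow$), I would fix an arbitrary $\vu \in \Omega_u$ and produce an interior solution as follows. Pick the line $\ell = \ell_{\vu^*}$ through $\vu^*$ and $\vu$ (any line if $\vu = \vu^*$). Because $\Omega_u$ is the axis-aligned box symmetric about $\vu^*$ by \eqref{eq:uLimits}, the two intersection points $\vl_{\min},\vl_{\max}$ of $\ell$ with $\partial\Omega_u$ are symmetric about $\vu^*$, so $\vu^*$ is their midpoint and the chord $\{(1-t)\vl_{\min}+t\vl_{\max} : t \in [0,1]\}$ sits inside $\Omega_u$ by convexity. In particular $H_\ell(1/2,\vx) = F(\vx) - \vu^*$, so the standing hypothesis reads $d(\Omega, F - \vu^*, \vzero) \neq 0$. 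The crucial observation is that (\ref{eq:RSForm}) is exactly the nonvanishing condition needed to invoke (\ref{eq:Deg1}) on the chord: for every $t \in [0,1]$ and every $\vx \in \partial\Omega$, the chord point $(1-t)\vl_{\min}+t\vl_{\max}$ lies in $\Omega_u$, so (\ref{eq:RSForm}) forces $H_\ell(t,\vx) \neq \vzero$. Hence $d(\Omega, H_\ell(t,\cdot), \vzero)$ is constant on $[0,1]$ and equals the nonzero value at $t = 1/2$. Writing $\vu = (1-s)\vl_{\min} + s\vl_{\max}$ for the appropriate $s \in [0,1]$, property (\ref{eq:Deg2}) at $t = s$ produces $\vx \in \Omega$ with $F(\vx) = \vu$; (\ref{eq:RSForm}) then forbids $\vx \in \partial\Omega$, so $\vx \in \Int(\Omega)$ and robust solvability at $\vu$ follows.

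For the forward direction ($\Rightarrow$), I would argue by contrapositive: suppose (\ref{eq:RSForm}) fails at some pair $(\vx_0,\vu_0)$ with $\vx_0 \in \partial\Omega, \vu_0 \in \Omega_u$, and show that some $\vu' \in \Omega_u$ admits no interior solution. Take $\ell$ through $\vu^*$ and $\vu_0$ and parametrize the chord so that $t_0$ corresponds to $\vu_0$. On $[0,1] \setminus \{t_0\}$ the homotopy $H_\ell(t,\cdot)$ is nonzero on $\partial\Omega$, so the degree $d(\Omega, H_\ell(t,\cdot), \vzero)$ is well-defined and locally constant, but at $t_0$ the boundary preimage $\vx_0$ breaks the hypothesis of (\ref{eq:Deg1}) and the degree can jump across $t_0$. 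A local sign count based on (\ref{eq:Deg3}) applied to the preimages of points near $\vu_0$ on each side of the chord then identifies a $\vu' \in \Omega_u$ on one side of $\vu_0$ for which the signed count collapses to zero, yielding by (\ref{eq:Deg2}) no interior solution and contradicting robust solvability.

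The main obstacle I expect is the forward direction: pinning down precisely how much degree is lost when the chord crosses the boundary preimage $\vx_0$, and ensuring the resulting drop actually certifies the absence of an interior solution for some $\vu' \in \Omega_u$ rather than merely altering a signed count among several interior preimages. Handling this cleanly likely requires a small generic perturbation of $\ell$ so that $\vx_0$ is an isolated preimage, transverse to $\partial\Omega$, with $J_F$ nonsingular there, so that the local degree contribution at $\vx_0$ is unambiguously $\pm 1$ from (\ref{eq:Deg3}). The backward direction, by comparison, needs only one clean application of homotopy invariance along the chord.
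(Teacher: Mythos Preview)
Your backward direction ($\Leftarrow$) is correct and is essentially the argument the paper has in mind: fix $\vu\in\Omega_u$, pick the line $\ell$ through $\vu^*$ and $\vu$, observe that the entire chord lies in $\Omega_u$, use \eqref{eq:RSForm} to verify the hypothesis of property~\eqref{eq:Deg1}, transfer the nonzero degree from $t=\tfrac12$ to the parameter value corresponding to $\vu$, and then invoke \eqref{eq:Deg2} and \eqref{eq:RSForm} once more to place the resulting solution in $\Int(\Omega)$. This is exactly the homotopy-along-chords mechanism the paper relies on, and you have written it out more carefully than the paper does.

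Your forward direction ($\Rightarrow$), however, has a genuine gap, and the obstacle you flag at the end is not merely a technicality but a real failure of the approach. Suppose \eqref{eq:RSForm} fails via $F(\vx_0)=\vu_0$ with $\vx_0\in\partial\Omega$. Your plan is to track the degree along the chord through $\vu_0$, detect a jump at $t_0$, and from a vanishing signed count conclude that some $\vu'$ has no interior solution. There are two problems. First, $d(\Omega,F-\vu',\vzero)=0$ does \emph{not} imply that $F(\vx)=\vu'$ has no solution in $\Omega$: property~\eqref{eq:Deg2} is a one-way implication, and the formula~\eqref{eq:Deg3} can vanish while preimages with cancelling Jacobian signs persist in the interior. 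Second, and more basically, the existence of a boundary solution at $\vu_0$ is entirely compatible with an interior solution at the same $\vu_0$ (and at every other $\vu\in\Omega_u$), so without further structural hypotheses on $F$ the contrapositive simply fails. The missing ingredient is a condition such as injectivity of $F$ on $\bar\Omega$, or equivalently $\partial F(\Omega)=F(\partial\Omega)$, which the paper introduces \emph{after} this lemma (following \cref{thm:MainIneq}) and uses explicitly in the $\Rightarrow$ direction of \cref{thm:RobFeas}. With that hypothesis the forward direction becomes immediate---a boundary solution is then the only solution---and no degree-jump analysis is needed. The paper's own proof of this direction is terse to the point of merely asserting the equivalence, so you have correctly located the weak spot; the fix is not a sharper degree argument but rather the structural assumption the paper defers to the sequel.
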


\begin{proof}
%Let $\vl_{\min},\vl_{\max}$ be two points of intersection between $\partial\Omega_u$ and $\ell_{\vu^*}$. We define a homotopy $H_{\ell_{\vu^*}} : [0,1]\times\bar{\Omega}\rightarrow\R^n$ as 
%	\begin{align}
%		H_{\ell_{\vu^*}}(t,\vx) = F(\vx)-\left[(1-t)\vl_{\min}+t\vl_{\max}\right]\,. \label{eq:Homo}
%	\end{align}
  Since $d\left(\Omega, F(\vx), \vu^*\right)\neq 0$ is verified by property (\ref{eq:Deg3}), to show the problem is robust solvable, it follows by the fact that there exists a unique solution $\hat{\vx}\in \Int(\Omega)$ to the forecasted system given in \cref{eq:Quad} such that  $H_{\ell_{\vu^*}}\left(\frac{1}{2},\hat{\vx}\right)=F(\hat{\vx})-\vu^*=\vzero$ and $ \operatorname{sign}\left(J_{H_{\ell_{\vu^*},\frac{1}{2}}}(\hat{\vx})\right) \neq 0$.
  And this condition holds if and only if $d(\Omega,H_\ell\left(\frac{1}{2},\vx\right),\vzero)\neq 0$ according to property (\ref{eq:Deg2}).
  Note that this property holds for all such lines passing through $\vu^*$ since for each $\hat{\vu}\in\Omega_u\setminus\{\vu^*\}$, there exists a line $\hat{\ell}_{\vu^*}$ passing through $\vu^*$ and $t\in[0,1]$, such that $\hat{\vu}=(1-t)\hat{\vl}_{\min}+t\hat{\vl}_{\max}$.
  Thus, when $d(\Omega,H_\ell\left(\frac{1}{2},\vx\right),\vzero)\neq 0$ for each choice of $\ell_{\vu^*}$,   we have 
  \[
    d(\Omega,H_\ell\left(\frac{1}{2},\vx\right),\vzero)\neq 0 \, \Longleftrightarrow \,
    F(\vx) - \vu \neq \vzero, \forall \vx \in \partial\Omega, \vu \in \Omega_u \,.
  \]
  Hence the system is robust solvable if and only if the statement  (\ref{eq:RSForm}) holds.
\end{proof}
%}

Note that the statement (\ref{eq:RSForm}) is equivalent to property (\ref{eq:Deg1}) holding.
From here on we will assume $d(\Omega,H\left(\frac{1}{2},\vx\right),\vzero)\neq 0$ and focus our efforts on the development of methods for validating or invalidating the statement (\ref{eq:RSForm}).

\begin{lem} 
  \label{lem:BdOpt}
  Let $X\subset\R^n$ be full-dimensional and compact. If $\vF:\R^n\rightarrow\R^n$ is continuous, and 
  \[
  \min\limits_{\norm{\vlambda}=1} \, \max\limits_{\vx\in X} \, \vlambda^T\vF(\vx)
  \]
  obtains its optimal value at $\hat{\vx}$ and $\vlambda_{\hat{\vx}}$ then $\vF(\hat{\vx})\in \partial \vF(X)$. 
\end{lem}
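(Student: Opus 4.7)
The plan is to recognize the inner problem $\max_{\vx \in X} \vlambda^T \vF(\vx)$ as the support function of the image set $\vF(X)$ evaluated in direction $\vlambda$, and then to invoke the elementary fact that a maximizer of a nonzero linear functional over any subset of $\R^n$ must lie on its topological boundary.

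First, I would observe that $\vF(X) \subset \R^n$ is compact, being the continuous image of the compact set $X$, so the extrema in question exist and $\partial \vF(X)$ is well defined. Next, from the optimality of $(\hat{\vx}, \vlambda_{\hat{\vx}})$ in the min-max with $\|\vlambda_{\hat{\vx}}\| = 1$, the point $\hat{\vx}$ attains the inner maximum $\max_{\vx \in X} \vlambda_{\hat{\vx}}^T \vF(\vx)$; equivalently, $\vF(\hat{\vx})$ maximizes the linear functional $\vy \mapsto \vlambda_{\hat{\vx}}^T \vy$ over the set $\vF(X)$.

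The closing step is a short contradiction. Were $\vF(\hat{\vx})$ interior to $\vF(X)$, then for some $\varepsilon > 0$ the point $\vF(\hat{\vx}) + \varepsilon\,\vlambda_{\hat{\vx}}$ would still lie in $\vF(X)$, yet
\[
\vlambda_{\hat{\vx}}^T\bigl(\vF(\hat{\vx}) + \varepsilon\,\vlambda_{\hat{\vx}}\bigr) = \vlambda_{\hat{\vx}}^T\vF(\hat{\vx}) + \varepsilon\,\|\vlambda_{\hat{\vx}}\|^2 > \vlambda_{\hat{\vx}}^T\vF(\hat{\vx}),
\]
contradicting the maximality established in the previous step. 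Hence $\vF(\hat{\vx}) \in \partial \vF(X)$.

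I do not expect a substantive obstacle. The min-max structure is used only to deliver an optimal direction $\vlambda_{\hat{\vx}}$; what really matters for the conclusion is that this direction is nonzero, guaranteed by the unit-norm constraint, and that $\hat{\vx}$ attains the inner maximum for it. In particular, no convexity of $\vF(X)$ is required, since the support-point-is-boundary argument is purely local and does not need a globally separating hyperplane.
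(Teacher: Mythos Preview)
Your proof is correct and follows the same overall strategy as the paper: assume $\vF(\hat{\vx})$ is interior to $\vF(X)$, use compactness to fit a small ball around it inside $\vF(X)$, and exhibit a point in that ball with strictly larger inner product with $\vlambda_{\hat{\vx}}$, contradicting the inner-max optimality of $\hat{\vx}$.

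The one genuine difference is the choice of perturbation. The paper perturbs $\vF(\hat{\vx})$ along the ray from the origin through $\vF(\hat{\vx})$, introduces the angle $\theta$ between $\vlambda_{\hat{\vx}}$ and $\vF(\hat{\vx})$, and compares $\norm{\vF(\hat{\vx})}\cos\theta$ with $\norm{\vy}\cos\theta$. You instead perturb directly along $\vlambda_{\hat{\vx}}$, which is cleaner: the strict increase $\vlambda_{\hat{\vx}}^T(\vF(\hat{\vx})+\varepsilon\vlambda_{\hat{\vx}})>\vlambda_{\hat{\vx}}^T\vF(\hat{\vx})$ is immediate from $\norm{\vlambda_{\hat{\vx}}}=1$ and requires no angle bookkeeping. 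This also sidesteps the edge cases the paper's construction would need to handle separately, namely $\vF(\hat{\vx})=\vzero$ (where the radial direction is undefined) and $\cos\theta\le 0$ (where increasing the norm does not increase the inner product). So your route is a modest but real simplification of the same idea.
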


\begin{proof} 
  We get the result by arriving at a contradiction.
  Assume $\vF(\hat{\vx})\in \vF(X)\setminus\partial \vF(X)$. 
  Let $\theta$ the angle between $\vlambda_{\hat{\vx}}$ and $F(\hat{\vx})$. 
  Thus
  \[
  \min\limits_{\norm{\vlambda}=1}\max\limits_{\vx\in X}\ \vlambda^T\vF(\vx) ~=~ \vlambda_{\hat{\vx}}^T\vF(\hat{\vx}) ~=~ \norm{\vF(\hat{\vx})}\cos(\theta).
  \]
  Since $X\subset\R^n$ is full-dimensional and compact, it has nonempty interior \cite{Ma1973}.
  And since $\vF$ is continuous, $\vF(X)$ is also nonempty and compact. 
  Hence there exists an $r>0$ such that  $B_r(\vF(\hat{\vx}))$, the ball of radius $r$ centered at $\vF(\hat{\vx})$, is in $\vF(X)\setminus\partial \vF(X)$. 
  Let $\vy$ be the antipodal point on $\partial B_r(\vF(\hat{\vx}))$ to the point of intersection between the line segment connecting the origin to $\vF(\hat{\vx})$ and $B_r(\vF(\hat{\vx}))$. 
  It follows then that $\norm{\vy}>\norm{\vF(\hat{\vx})}$ and $\theta$ is the angle between $\vlambda_{\hat{\vx}}$ and $\vy$.   
  Let $\vx^*\in X$ be such that $\vF(\vx^*)=\vy$.
  Such an $\vx^*$ exists since $\vy$ lies inside $\vF(x)$ which is nonepty and $\vF$ is continuous. 
  Therefore $\vlambda_{\hat{\vx}}^T\vF(\vx^*)=\norm{\vF(\vx^*)} \cos(\theta) > \norm{\vF(\hat{\vx})} \cos(\theta) = \vlambda_{\hat{\vx}}^T\vF(\hat{\vx})$, which is a contradiction. 
  The lemma now follows.
\end{proof}

We illustrate \cref{lem:BdOpt} in \cref{fig:illustration}.
The compact set $X\subset\R^2$ is shown in pink with boundary shown in blue.
For continuous $\vF:\R^2 \rightarrow \R^2$, let $\hat{\vx}$ be the point and $\vlambda_{\hat{\vx}}$ the unit vector that give the optimal value of $\min_{\norm{\vlambda}=1} \, \max_{\vx\in X} \, \vlambda^T\vF(\vx)$ as $\vlambda_{\hat{\vx}}^T\vF(\hat{\vx})$ based on \cref{lem:BdOpt}.
These vectors are shown in red.
In the case when the origin is contained in $\vF(X)$, as it happens here, we observe that there exists another point $\vx$ and unit vector $\vlambda_{\vx}$ satisfying $\vF(\vx) \in \partial \vF(X)$ (and thus $\vx\in \partial X$) such that we can get the lower bound of $\min_{\norm{\vlambda}=1}\max_{\vx\in X}\ \vlambda^T\vF(\vx)$ as $\min_{\vx\in \partial X} \max_{\norm{\vlambda}=1}\ \vlambda^T\vF(\vx)$ (shown in green).

\begin{figure}[htp!]
\begin{center}
  \includegraphics[scale=0.32]{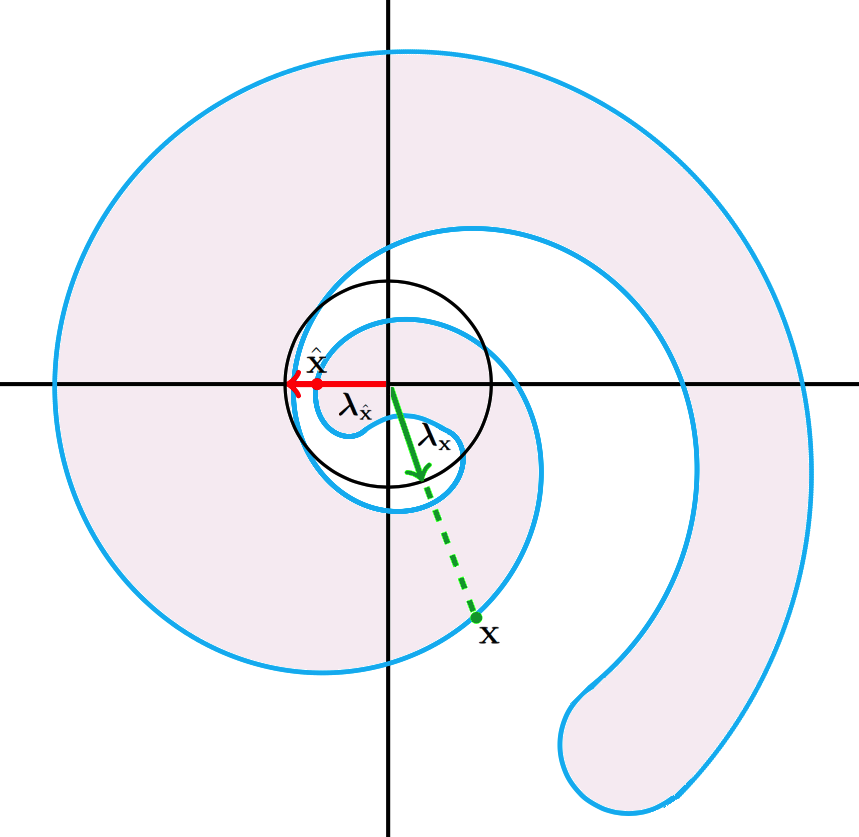}
\end{center}
\caption{\label{fig:illustration}
  Illustration of \cref{lem:BdOpt}.
  The unit vectors $\vlambda_{\hat{\vx}}$ and $\vlambda_{\vx}$ are shown as the red and green arrows, respectively.
}
\end{figure}

We formalize the last observation for the general case in our main theorem, which characterizes the structure of the function $\vF$.

\begin{thm} 
  \label{thm:MainIneq}
  Let $X\subset\R^n$ be full-dimensional and compact, $\vF:\R^n\rightarrow\R^n$ be continuous such that $\vF(X)$ contains the origin. 
  If $\vF$ is injective over $X$ then
  \[
  \min\limits_{\norm{\vlambda}=1}\max\limits_{\vx\in X}\ \vlambda^T\vF(\vx) ~\geq~ \min\limits_{\vx\in \partial X} \max\limits_{\norm{\vlambda}=1}\ \vlambda^T\vF(\vx).
  \]
  %
  %\smallskip
  \begin{proof}
    Consider the origin lies on the boundary of $\vF(X)$, and let $\hat{\vx} \in \partial X$ such that $F(\hat{\vx})=\vzero$. 
    Such an $\hat{\vx}$ exists since $\partial F(X) = F(\partial X)$ by the Invariance of Domain theorem, as $F$ is continuous and injective over a compact set (so, an interior point cannot get mapped to a boundary point). 
    Then  clearly
    \[
       \min\limits_{\norm{\vlambda}=1}\max\limits_{\vx\in X}\ \vlambda^T\vF(\vx) ~\geq~
       \min\limits_{\norm{\vlambda}=1}\vlambda^T\vF(\hat{\vx}) ~=~ 0 ~=~
       \max\limits_{\norm{\vlambda}=1}\ \vlambda^T\vF(\hat{\vx}) ~\geq~
       \min\limits_{\vx\in \partial X}\max\limits_{\norm{\vlambda}=1}\ \vlambda^T\vF(\vx).
    \]
    Hence assume the origin lies in the interior of $\vF(X)$.
    Let $\hat{\vx}$ be the point and $\vlambda_{\hat{\vx}}$ the unit vector at which
    \[
    \min\limits_{\norm{\vlambda}=1}\max\limits_{\vx\in X}\ \vlambda^T\vF(\vx)
    \]
    obtains its optimal value.
    We consider two cases.

    Case 1: If the angle $\theta$ between $F(\hat{\vx})$ and $\vlambda_{\hat{\vx}}$ is $0$, then $\displaystyle \max\limits_{\norm{\vlambda}=1}\vlambda^T\vF(\hat{\vx}) = \vlambda_{\hat{\vx}}^T\vF(\hat{\vx})$. 
    Furthermore by \cref{lem:BdOpt}, $\vF(\hat{\vx}) \in \partial \vF(X)$ and thus $\hat{\vx}\in \partial X$ since $\vF$ is injective, i.e., $\vF$ maps $\partial X$ to $\partial \vF(X)$, by hypothesis. 
    It follows now that
    \[
      \min\limits_{\vx\in \partial X}\max\limits_{\norm{\vlambda}=1}\ \vlambda^T\vF(\vx) ~\leq~
      \max\limits_{\norm{\vlambda}=1}\ \vlambda^T\vF(\hat{\vx}) ~=~
      \vlambda_{\hat{\vx}}^T\vF(\hat{\vx}) ~=~
      \min\limits_{\norm{\vlambda}=1}\max\limits_{\vx\in X}\ \vlambda^T\vF(\vx).
    \]

    Case 2: Assume the angle $\theta \neq 0$ (between $F(\hat{\vx})$ and $\vlambda_{\hat{\vx}}$). 
    Let $\vx^*$ be a point on the boundary of $X$ such that the angle between $\vlambda_{\hat{\vx}}$ and $\vF(\vx^*)$ is 0. 
    Such a point must exist as $\vF(X)$ is compact and by hypothesis $\vF(X)$ contains the origin, is injective,
    %i.e., $\vF$ maps $\partial X$ to $\partial \vF(X)$,
    and by assumption the origin lies in the interior of $\vF(X)$. 
    It follows then that
    \[
      \min\limits_{\vx\in \partial X}\max\limits_{\norm{\vlambda}=1}\ \vlambda^T\vF(\vx) ~\leq~
      \max\limits_{\norm{\vlambda}=1}\vlambda^T \vF(\vx^*) ~=
      \vlambda_{\hat{\vx}}^T\vF(\vx^*) ~\leq
      \max\limits_{\vx\in X}\vlambda_{\hat{\vx}}^T\vF(\vx) ~=~
      \min\limits_{\norm{\vlambda}=1}\max\limits_{\vx\in X}\ \vlambda^T\vF(\vx).
    \]
    The theorem now follows.
  \end{proof}
\end{thm}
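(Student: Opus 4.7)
The plan is to split into cases based on whether $\vzero \in \partial \vF(X)$ or $\vzero \in \Int \vF(X)$, and then in the interior case to exploit \cref{lem:BdOpt} together with the invariance of domain theorem, which applies since $\vF$ is a continuous injection defined on the compact set $X$.

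First I would observe that the LHS is automatically nonnegative: taking any preimage $\vx_0$ of the origin gives $\max_{\vx \in X}\vlambda^T \vF(\vx) \geq \vlambda^T \vF(\vx_0) = 0$ for every unit $\vlambda$. If $\vzero \in \partial \vF(X)$, then invariance of domain gives $\vF(\partial X) = \partial \vF(X)$, so there is some $\hat{\vx} \in \partial X$ with $\vF(\hat{\vx}) = \vzero$; evaluating the RHS at this $\hat{\vx}$ yields $\max_{\norm{\vlambda}=1} \vlambda^T \vF(\hat{\vx}) = 0$, so RHS $\leq 0 \leq$ LHS.

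Now assume $\vzero \in \Int \vF(X)$. Let $(\hat{\vx}, \vlambda_{\hat{\vx}})$ attain the LHS optimum. By \cref{lem:BdOpt}, $\vF(\hat{\vx}) \in \partial \vF(X)$, and hence $\hat{\vx} \in \partial X$ by invariance of domain. I would then distinguish on the angle $\theta$ between $\vlambda_{\hat{\vx}}$ and $\vF(\hat{\vx})$. If $\theta = 0$, then $\max_{\norm{\vlambda}=1} \vlambda^T \vF(\hat{\vx}) = \vlambda_{\hat{\vx}}^T \vF(\hat{\vx})$, which equals the LHS, and since $\hat{\vx} \in \partial X$ this bounds the RHS from above. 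If $\theta \neq 0$, I would shoot the ray from $\vzero$ in direction $\vlambda_{\hat{\vx}}$ and take its exit point from $\vF(X)$; since $\vzero$ is interior and $\vF(X)$ is compact, this exit point lies in $\partial \vF(X)$, and its preimage $\vx^*$ is in $\partial X$. Then the RHS is at most $\max_{\norm{\vlambda}=1}\vlambda^T \vF(\vx^*) = \vlambda_{\hat{\vx}}^T \vF(\vx^*)$, which is in turn at most $\max_{\vx \in X}\vlambda_{\hat{\vx}}^T \vF(\vx) = $ LHS.

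The step I expect to be the trickiest is the non-aligned case — specifically, justifying that along the direction $\vlambda_{\hat{\vx}}$ one actually reaches a boundary point of $\vF(X)$ whose preimage lies in $\partial X$. The hypothesis $\vzero \in \Int \vF(X)$ is pulling real weight here, since it ensures the ray exits $\vF(X)$; injectivity is simultaneously needed so that invariance of domain transfers boundary information between $X$ and $\vF(X)$. Without either, the ray argument and the boundary correspondence both collapse, so the clean statement hinges on combining compactness, interiority of the origin, and injectivity at exactly this point.
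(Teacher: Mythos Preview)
Your proposal is correct and follows essentially the same route as the paper: split on whether $\vzero$ lies on $\partial\vF(X)$ or in $\Int\vF(X)$, invoke invariance of domain to identify $\partial\vF(X)$ with $\vF(\partial X)$, apply \cref{lem:BdOpt} to locate the LHS optimizer on the boundary, and in the non-aligned case shoot a ray in direction $\vlambda_{\hat{\vx}}$ to produce the auxiliary boundary point $\vx^*$. Your write-up is in fact slightly more explicit than the paper's about why the ray exits $\vF(X)$ at a boundary point and why the preimage lands in $\partial X$, but the underlying argument is identical.
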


\cref{thm:MainIneq} and \cref{lem:BdOpt} provide us with the theoretical tools we need to develop procedures for approximating the robustness margin.
The hypothesis of \cref{thm:MainIneq} does however require us to assume the system is injective under the constraints in Equation (\ref{eq:xLimits}).
However, injectivity is only required to ensure $\partial F(X) = F(\partial X)$, and thus we can generalize to systems that are not necessarily injective if they yet retain $\partial F(X) = F(\partial X)$ as an applicable property. 
With this in mind we carry with us the necessary property $\partial F(X) = F(\partial X)$ throughout the rest of the article. 
%There is room to improve here especially if there is a unique solution at u^* as this will still gaurantee the validity of the lower bd, but not necessarily that of the upper bd

We will use the terminology \enquote{inner bound procedures} to describe the processes of verifying robust feasibility while expanding the uncertainty box centered at $\vu^*$ in order to compute the lower bounds on the robustness margin, which these procedures undertake. 
We use the terminology \enquote{outer bound procedures} to capture in a similar fashion the procedures used to compute the upper bounds on the robustness margin by contracting the uncertainty box until the system may be robust feasible. 
As such we dedicate the next two sections to the development of these inner and outer bound formulations. 

\section{Computing Lower Bounds on the Robustness Margin} \label{sec:inbdform}

In this section we will derive procedures for computing a lower bound on the robustness margin. 
We start with an exact formulation, which turns out to be hard to implement efficiently in practice.
Hence we relax the procedures until they become computationally tractable. 
We end the section by providing three different implementations of our final derived, relaxed, computationally tractable procedure specified in Equation (\ref{eq:OPTfeasrelax}).
Each of these three practical implementations brings a unique set of attributes, which makes none of them the clearly preferred candidate.

\smallskip
\begin{thm}
  Let $\Omega=\{\vx| A\vx \leq \vb\}$, $\Omega_{u}=\{\vu| u^{\min}_i\leq u_i \leq u^{\max}_i \ \forall i \}$, and $F(\vx)=Q(\vx)+L\vx$ as described in Equations (\ref{eq:Quad}), (\ref{eq:xLimits}), and (\ref{eq:uLimits}). 
  Let
  \[
  z = \min\limits_{\vx \in \partial \Omega, \vu \in \Omega_{u} }\max\limits_{\norm{\vlambda}=1}\ \vlambda^T\left(F(\vx)-\vu\right).
  \]
  If there is an $r>0$ such that $r \leq e_i \ \forall i$ with $\ e_i>0$, where $e_i$ denotes the error bounds associated with $ u^{\min}_i$ and $ u^{\max}_i$, and if $z > 0$ then the system is robust feasible and has a robustness margin of at least $r$.

  \medskip
  \begin{proof} 
    If \cref{eq:RSForm} is invalidated then there exists $\hat{\vx} \in \partial\Omega$ such that $F(\hat{\vx})=\hat{\vu}$ for some $\hat{\vu}\in\Omega_{u}$ and thus
    \[
    z = \min\limits_{\vx \in \partial \Omega, \vu \in \Omega_{u}}\max\limits_{\norm{\vlambda}=1}\ \vlambda^T\left(F(\vx)-\vu\right) ~\leq~ \max\limits_{\norm{\vlambda}=1, \vx=\hat{\vx}}\ \vlambda^T\left(F(\vx)-\hat{\vu}\right) ~=~ 0.
    \]
    Hence if $z>0$, \cref{eq:RSForm} is validated, and the system is robust feasible.
    It follows then by definition that the system has a robustness margin of at least $r$.
  \end{proof}
\end{thm}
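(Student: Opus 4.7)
The plan is to argue by contrapositive using \cref{lem:NaScondition}, which tells us that under the standing assumption $d(\Omega, H_\ell(\tfrac{1}{2},\vx), \vzero) \neq 0$, robust solvability of the system is equivalent to the nonexistence statement \eqref{eq:RSForm}. So I would aim to show that $z > 0$ forces \eqref{eq:RSForm} to hold, and then invoke the lemma to conclude robust feasibility; the bound on the robustness margin then follows immediately from \cref{RobustDef}.

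First, I would suppose for contradiction that \eqref{eq:RSForm} fails, so there exist $\hat{\vx}\in\partial\Omega$ and $\hat{\vu}\in\Omega_u$ with $F(\hat{\vx})-\hat{\vu}=\vzero$. For this particular pair $(\hat{\vx},\hat{\vu})$, every unit vector $\vlambda$ gives $\vlambda^T(F(\hat{\vx})-\hat{\vu})=0$, so in particular
\[
\max_{\norm{\vlambda}=1}\ \vlambda^T\bigl(F(\hat{\vx})-\hat{\vu}\bigr)=0.
\]
Since $(\hat{\vx},\hat{\vu})$ is a feasible point for the outer minimization defining $z$, this upper bounds $z$:
\[
z \;\le\; \max_{\norm{\vlambda}=1}\ \vlambda^T\bigl(F(\hat{\vx})-\hat{\vu}\bigr) \;=\; 0,
\]
contradicting $z>0$. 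Hence \eqref{eq:RSForm} holds, and by \cref{lem:NaScondition} the system is robust solvable with respect to the uncertainty box $\Omega_u$ determined by the error bounds $\{e_i\}$.

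To finish, I would translate this into the robustness margin claim. By hypothesis the chosen $r>0$ satisfies $r \le e_i$ for every $i$ with $e_i>0$, so the uncertainty box of radii $r$ (in the active coordinates) is contained in $\Omega_u$. Robust feasibility over $\Omega_u$ therefore implies robust feasibility for this smaller box, which by \cref{RobustDef} means the robustness margin is at least $r$.

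There is essentially no technical obstacle here: the argument is just the contrapositive of the definition of $z$ combined with \cref{lem:NaScondition}. The only point that requires a little care is making explicit that $z$ is defined as a \emph{joint} minimum over $(\vx,\vu)\in\partial\Omega\times\Omega_u$, so any putative counterexample pair to \eqref{eq:RSForm} is automatically an admissible point in the minimization, which is what yields $z\le 0$ cleanly.
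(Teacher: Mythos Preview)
Your proposal is correct and follows essentially the same approach as the paper: both argue the contrapositive that a failure of \eqref{eq:RSForm} yields a feasible pair $(\hat{\vx},\hat{\vu})$ forcing $z\le 0$, and then conclude robust feasibility and the margin bound from \cref{lem:NaScondition} and \cref{RobustDef}. Your write-up is slightly more explicit about invoking \cref{lem:NaScondition} and about the containment of the radius-$r$ box in $\Omega_u$, but the logic is identical.
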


  \medskip
\begin{thm} \label{thm:RobFeas}
  Let $\partial\Omega_i=\{\vx| (A\vx)_i = b_i, A\vx\leq \vb\}$, $\Omega_{u}=\{\vu| u^{\min}_i\leq u_i \leq u^{\max}_i \ \forall i \}$, and $F(\vx)=Q(\vx)+L\vx$ as described in \cref{eq:Quad,eq:xLimits,eq:uLimits}.
  Define
  \begin{align}
    \rz_i =  \min_{\vx\in\partial\Omega_i, \vu \in \Omega_u} \norm{F(\vx)-\vu}. \label{eq:OPTfeas}
  \end{align}
  The system is robust feasible if and only if $\rz_i>0$ for each $i = 1, \ldots, m$, where $m$ is the number of rows of $A$.
%If $\rz_i>0$ for each $i = 1, \ldots, m$ (where $m$ is the number of rows of $A$), then the system is robust feasible.

  \begin{proof} \ \\
    $\boxed{\Rightarrow}$ \\ 
    If the system is not robust feasible then there exists $\hat{\vu}'\in \Omega_u$ such that $F(\vx)=\hat{\vu}'$ has no interior solution. 
    Since $F(\vx)=\hat{\vu}'$ has a solution and $F$ is continuous over a compact domain, there must exist an $\hat{\vx} \in \Omega = \{\vx| A\vx \leq \vb\}$ such that $F(\hat{\vx})=\hat{\vu}$ for some $\hat{\vu}\in \Omega_u\cap \partial F(X)$. 
    Since $\partial F(X) = F(\partial X)$ we have that $\hat{\vx}\in\partial\Omega$, which implies that there is an $i$ such that $A\hat{\vx}_i=b_i$, and thus $0 \leq \rz_i \leq \norm{F(\hat{\vx})-\hat{\vu}}=0$.\\
    $\boxed{\Leftarrow}$ \\ 
    If there exists a $i$ such that $z_i=0$, then it follows that there must be the $\hat{\vx}\in \partial \Omega$ which contains $\hat{x_i}$ as an element such that $F(\hat{\vx})=\hat{\vu}$ for some $\hat{\vu}\in \Omega_u$.
    Thus there is no \emph{interior} point $\vx$ to make the equation $F(\vx)=\hat{\vu}$ hold.
    Then based on \cref{RobustDef}, the system is not robust feasible. 
    %If there exists an $i$ such that $\rz_i = 0$, then there must exist an $\hat{\vx} \in \partial\Omega $ and $\hat{\vu}\in \Omega_u$ such that $F(\hat{\vx})=\hat{\vu}$. 
    %Since $\partial F(X) = F(\partial X)$, there cannot exist an $\hat{\vx}' \in \Int\Omega$ such that $F(\hat{\vx}')=\hat{\vu}$, and hence by definition the system is not robust feasible.
    %\cref{eq:RSForm} is invalidated, which implies $\exists \hat{\vx}\in\partial\Omega$, $\Omega=\{\vx| A\vx< \vb\}$, such that $F(\hat{\vx})=\hat{\vu}$ for some $\hat{\vu}\in \Omega_u$. 
    %Since $\hat{\vx}\in\partial\Omega$ this implies that $\exists i$ such that $A\hat{\vx}_i=b_i$ and thus $0\leq \rz_i \leq ||F(\hat{\vx})-\hat{\vu}||=0$.
    %Thus if $\rz_i>0$ for each $i = 1, \ldots, m$, \cref{eq:RSForm} is validated, and the system is robust feasible.
  \end{proof}
\end{thm}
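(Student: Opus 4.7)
The plan is to reduce \cref{thm:RobFeas} to \cref{lem:NaScondition} by using the facet decomposition of $\partial\Omega$. Under the standing assumption $d(\Omega, H_\ell(\tfrac{1}{2},\cdot),\vzero) \neq 0$, \cref{lem:NaScondition} characterizes robust feasibility as the non-existence of a pair $(\vx,\vu) \in \partial\Omega \times \Omega_u$ with $F(\vx) = \vu$, i.e., condition \eqref{eq:RSForm}. Since $\Omega = \{\vx : A\vx \leq \vb\}$ is a polytope specified by non-redundant inequalities, every boundary point makes at least one inequality tight, so $\partial\Omega = \bigcup_{i=1}^{m} \partial\Omega_i$. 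Hence \eqref{eq:RSForm} holds if and only if, for each $i$, no pair in $\partial\Omega_i \times \Omega_u$ satisfies $F(\vx) = \vu$.

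First I would record the compactness setup: assuming $\Omega$ is bounded (needed for the robustness margin to be well-defined), each facet $\partial\Omega_i$ is compact, and $\Omega_u$ is a compact box. The map $(\vx,\vu) \mapsto \norm{F(\vx)-\vu}$ is continuous, so the infimum defining $\rz_i$ in \eqref{eq:OPTfeas} is attained at some $(\hat{\vx}_i, \hat{\vu}_i) \in \partial\Omega_i \times \Omega_u$. Consequently, $\rz_i = 0$ if and only if there exist $\hat{\vx} \in \partial\Omega_i$ and $\hat{\vu} \in \Omega_u$ with $F(\hat{\vx}) = \hat{\vu}$.

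With these two ingredients in place, both directions fall out quickly. For $(\Rightarrow)$, I would argue the contrapositive: if $\rz_i = 0$ for some $i$, then by attainment there is a witness pair in $\partial\Omega_i \times \Omega_u \subseteq \partial\Omega \times \Omega_u$ violating \eqref{eq:RSForm}, so by \cref{lem:NaScondition} the system is not robust feasible. For $(\Leftarrow)$, if $\rz_i > 0$ for every $i \in [m]$, then no pair in any $\partial\Omega_i \times \Omega_u$ satisfies $F(\vx) = \vu$; taking the union over $i$ via the facet decomposition of $\partial\Omega$ yields \eqref{eq:RSForm}, and another appeal to \cref{lem:NaScondition} delivers robust feasibility.

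The main point to be careful about is the attainment of the minimum, which requires boundedness of $\Omega$; without it, $\rz_i = 0$ would not necessarily produce an actual boundary witness and the two directions would have to be handled with limiting arguments. A secondary subtlety is that the facet decomposition $\partial\Omega = \bigcup_i \partial\Omega_i$ relies on the redundancy-free assumption on $A\vx \leq \vb$ recorded in \cref{sec:probform}; under that hypothesis no inequality can be strict at every boundary point, so the decomposition is valid and the facet-wise test in \eqref{eq:OPTfeas} is equivalent to the global boundary test in \eqref{eq:RSForm}.
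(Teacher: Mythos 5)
Your proof is correct, but it routes through a different key result than the paper's own argument. You reduce \cref{thm:RobFeas} to \cref{lem:NaScondition}: under the standing degree assumption, robust feasibility is equivalent to \cref{eq:RSForm}, and you connect \cref{eq:RSForm} to the quantities $\rz_i$ by two pieces of bookkeeping that the paper leaves implicit --- the decomposition $\partial\Omega=\bigcup_{i}\partial\Omega_i$ and attainment of the minimum in \cref{eq:OPTfeas} on the compact set $\partial\Omega_i\times\Omega_u$, so that $\rz_i=0$ holds exactly when a boundary witness $F(\hat{\vx})=\hat{\vu}$ exists. The paper instead re-argues both directions directly from \cref{RobustDef} without citing the lemma: in one direction it uses continuity and compactness together with the carried property $\partial F(X)=F(\partial X)$ to produce a point $\hat{\vx}\in\partial\Omega$ with $F(\hat{\vx})=\hat{\vu}\in\Omega_u$ when robust feasibility fails, and in the other it asserts that a boundary solution leaves no interior solution for that $\hat{\vu}$, a step that implicitly rests on injectivity (invariance of domain) rather than on the degree hypothesis. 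Your packaging buys explicitness on exactly the steps the paper glosses (attainment, facet decomposition) and avoids re-deriving the degree-theoretic content, at the price of inheriting the hypothesis of \cref{lem:NaScondition} (nonzero degree for every line $\ell_{\vu^*}$), which the paper has in any case adopted as a standing assumption after that lemma; the paper's proof leans instead on $\partial F(X)=F(\partial X)$. Your caveat about boundedness of $\Omega$ is well placed: the paper also needs compactness of the domain (its own proof says ``continuous over a compact domain'') even though this is never stated as an explicit hypothesis.
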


%Note that under these circumstances $\min\limits_{\norm{\vlambda}=1}\max\limits_{x\in \bar{\Omega}}\ \vlambda^T\left(F(x)-\hat{u}\right)> 0$ is possible, especially if $F(\bar{\Omega})$ is highly non-convex. However, should $\exists u^{\prime}$ such that $\min\limits_{\norm{\vlambda}=1}\max\limits_{x\in \bar{\Omega}}\ \vlambda^T\left(F(x)-u^{\prime}\right)=0$ then by \ref{MainInEqLem} we know that there $\exists \hat{x}\in\partial\Omega$ such that $F(\hat{x})=u^{\prime}$. Hence the following formulation of an outer bound approximation on $u_i^{\min}$ and $u_i^{\max}$:

The optimization problems presented in \cref{thm:RobFeas} are nonlinear and nonconvex.
Hence it may be difficult to solve them in general.
In fact, since $F(\vx)$ is quadratic in $\vx$, they are quadratically constrained quadratic programs (QCQPs), which are NP-hard in general \cite{PaBo2017}.
At the same time, we can use well known \emph{semidefinite programming relaxations for QCQPs} to obtain lower bounds on the optimal values \cite{VaBo1996}.
Since $Q$ is quadratic, it can also be written as a linear function of $\vx\vx^T$.
More concretely, we can write
$$Q(\vx)_i=\vx^TQ_i\vx=\Tr(Q_i\vx\vx^T)$$
where each $Q_i$ is as defined in \cref{eq:Quad}. 
Since $\vx$ should satisfy $A\vx\leq \vb$, we get
\[
  (\vb-A\vx)(\vb-A\vx)^T \geq 0~~\Rightarrow~~ \vb\vb^T-A\vx\vb^T-\vb(A\vx)^T+A(\vx\vx^T)A^T\geq 0.
\]
If we allow a symmetric positive semidefinite matrix $X$ to take the place of $\vx\vx^T$ and drop the rank constraint ($\rank(X)=1$)) we can construct the following relaxation for the optimization problem presented in \cref{thm:RobFeas}:
\begin{equation}\label{eq:OPTfeasrelax}
  \begin{array}{rl}
    \hat{\rz}_{i} = & \min\limits_{\vx}  b_i-(A\vx)_i  \\
    \vspace*{-0.05in} \\
%\begin{align*}
    \text{subject to } \ & \Tr\left(Q_iX\right)+ L_i\vx \geq \vu_i^{\min} \ \ \forall i\\
    & \Tr\left(Q_iX\right)+ L_i\vx \leq \vu_i^{\max} \ \ \forall i\\
    &A\vx\leq \vb \\
    &\vb\vb^T-A\vx\vb^T-\vb(A\vx)^T+AXA^T\geq O \\
    &X \text{ is symmetric and positive semidefinite.}
  \end{array}
\end{equation}
%\end{align*}
%
Here $L_i$ denotes the $i^{\rm th}$ rows of $L$, and $O$ denotes the $2n \times 2n$ matrix of zeros with the constraints understood to be component-wise inequalities.
Note that if we impose $\rank(X)=1$, we do get an exact formulation.
At the same time, it may be difficult to impose this constraint.
But the system (in \cref{eq:OPTfeasrelax}) without the rank constraint is a convex optimization problem (a semidefinite program, in fact) and can be solved efficiently. 
Since this is a relaxation of the procedure described in \cref{thm:RobFeas}, if $\hat{\rz}_i>0$ for each $i$, the condition in \cref{thm:RobFeas} (i.e., $\rz_i>0 \ \forall i$) is satisfied. 
We can further relax the formulation in \cref{eq:OPTfeasrelax} by dropping the condition that $X$ be positive semidefinite, which transforms the problem from a semidefinite program to a linear program. 
We now present three formulations for this new, relaxed program, and provide numerical results for each of them in \cref{ssec:compres}.
We also describe the advantages and drawbacks of each formulation.

\bigskip
\bigskip
\textbf{LP Feasibility Procedure} 
\begin{equation} \label{eq:OPTfeasrelaxLP1}
\begin{array}{rl}
 &\text{Find an }\vx \\
 \text{subject to } \ &(A\vx)_i= b_i \\
 &\Tr\left(Q_iX\right)+ L_i\vx \geq \vu_i^{\min}  \ \ \forall i\\
 & \Tr\left(Q_iX\right)+ L_i\vx \leq \vu_i^{\max}  \ \ \forall i\\
 	&A\vx\leq \vb \\
 	&\vb\vb^T-A\vx\vb^T-\vb(A\vx)^T+AXA^T\geq O \\
 	&X \text{ is symmetric.}
\end{array}
\end{equation}

This procedure has the advantage of being a linear program and hence can be solved efficiently.
But as noted by the objective $\rz_i$, one must iterate over each dimension of $A\vx$ checking feasibility of the procedure. 
Of course, should the procedure prove feasible then we have found a solution on the boundary and thus invalidating the Statement in (\cref{eq:RSForm}). 
If the procedure proves infeasible then we are free to push the robustness margin higher and test again. 
An alternative approach is to consider all of the dimensions of $A\vx$ simultaneously by introducing extra binary variables and creating a MIP as follows. 

\bigskip
\textbf{MIP Procedure}\
\begin{equation}\label{eq:OPTfeasrelaxLP2}
\begin{array}{rl}
\max &  z  \\
 \text{subject to } \ &\Tr\left(Q_iX\right)+ L_i\vx \geq \vu_i^{\min}  \ \ \forall i\\
 & \Tr\left(Q_iX\right)+ L_i\vx \leq \vu_i^{\max}  \ \ \forall i\\
 	&A\vx\leq \vb \\
 	&\vb\vb^T-A\vx\vb^T-\vb(A\vx)^T+AXA^T\geq O \\
 	&X \text{ is symmetric} \\
 	& z\leq (A\vx)_i-b_i+R(1-d_i) \ \forall i \ \text{ for some large enough $R$} \\
 	& \sum\limits_i d_i=1 \\
 	& d_i\in\{0,1\} \ \forall i .
\end{array}
\end{equation}

The MIP and LP Feasibility procedures are similar and should theoretically give the same results. 
However, as we will show in our numerical studies, the LP Feasibility procedure could outperform the MIP procedure by producing higher robustness margins and running faster in higher dimensions. 
In both procedures, the process ends after a boundary solution is found. 
This solution may not be a boundary solution to the actual system, but may be an artifact of the relaxations used to create the procedures. 
One way of tackling this issue is by updating the constraints using an iterative process as outlined in the following procedure.

\bigskip
\textbf{LP Bound Tightening Procedure}
\begin{equation}\label{eq:OPTfeasrelaxLP3}
\begin{array}{rl}
\max &  z_i = (A\vx)_i  \\
 \text{subject to } \ &\Tr\left(Q_iX\right)+ L_i\vx \geq \vu_i^{\min}  \ \ \forall i\\
 & \Tr\left(Q_iX\right)+ L_i\vx \leq \vu_i^{\max}  \ \ \forall i\\
 	&A\vx\leq \vb \\
 	&\vb\vb^T-A\vx\vb^T-\vb(A\vx)^T+AXA^T\geq O \\
 	&X \text{ is symmetric.}
\end{array}
\end{equation}

What distinguishes the LP Bound Tightening procedure from the other two procedures is the ability to use it iteratively by updating the constraints of \cref{eq:OPTfeasrelaxLP3}, replacing $\vb$ with $\vz$, the vector of $z_i$'s found after running the procedure over all dimensions of $A\vx$. 
Since clearly $\vz \leq \vb$, we have that the polytope $\{\vx ~|~ A\vx\leq \vz\} \, \subseteq \, \{\vx ~|~ A\vx\leq \vb\}$. 
Thus it follows that any system deemed robust feasible using \cref{eq:OPTfeasrelaxLP1} or \cref{eq:OPTfeasrelaxLP2} will certainly be found robust feasible using \cref{eq:OPTfeasrelaxLP3}, but a system found robust feasible using \cref{eq:OPTfeasrelaxLP3} may not be found robust feasible using \cref{eq:OPTfeasrelaxLP1} or \cref{eq:OPTfeasrelaxLP2}. 
The drawback of the bound tightening procedure, as we shall see in the Computational Results (\cref{ssec:compres}), is choice of parameters to be manually set in order to tell the procedure when to stop.

\section{Computing Upper Bounds on the Robustness Margin} \label{sec:outbdform}  

Due to the hardness of solving the problem $~\displaystyle \min\limits_{\vx\in \partial \Omega, \vu\in\Omega_u}\max\limits_{\norm{\vlambda}=1 }\ \vlambda^T\left(F(\vx)-\vu\right),\,$ we derived a sequence of relaxations in order to arrive at a method that was computationally efficient for non-trivial data sets. 
We can similarly work with $~\displaystyle \min\limits_{\norm{\vlambda}=1}\max\limits_{\vx\in \bar{\Omega}, \vu\in\Omega_u}\ \vlambda^T\left(F(\vx)-\vu\right) \,$ to produce computationally efficient procedures to compute upper bounds for the robustness margin. 

\bigskip
\begin{thm}\label{thm:OPTfeasOut} 
Let $\bar{\Omega}=\{\vx| A\vx\leq \vb\}$, $\Omega_{u}=\{\vu| u^{\min}_i\leq u_i \leq u^{\max}_i \ \forall i \}$, and $F(\vx)=Q(\vx)+L\vx$ as described in \cref{eq:Quad,eq:xLimits,eq:uLimits}. 
Let
\[
  z = \min\limits_{\norm{\vlambda}=1}\max\limits_{\vx\in \bar{\Omega}, \vu\in\Omega_u}\ \vlambda^T\left(F(\vx)-\vu\right).
\]
If there is an $r>0$ such that $r \leq e_i \ \mbox{ for all } i \mbox{ with } e_i>0$, where $e_i$ is the error bound associated with $ u^{\min}_i$ and $ u^{\max}_i$, and if $z=0$ then the system has robustness margin of no more than r.

\medskip
\begin{proof} 
% Observe by \cref{lem:BdOpt} that if $\min\limits_{\norm{\vlambda}=1}\max\limits_{\vx\in \bar{\Omega}, \vu \in \Omega_u}\ \vlambda^T\left(F(\vx)-\vu\right)=0$, then the statement in \cref{eq:RSForm} is invalidated.
  Observe by \cref{lem:BdOpt}, if $z_i = \min_{\|\lam\|=1}\max_{\vx\in\bar{\Omega}, \vu\in\Omega_u}\lam^T(F(\vx)-\vu^*)=0$, then we have $F(\hat{\vx})\in \partial F(X)=F(\partial X)$. Thus, it's followed by the statement that there exist a $\vx\in \partial X$ such that $F(\vx)-\vu=0$ for some $u\in \Omega_u$ which invalids \cref{eq:RSForm}.
%
%Observe that if \cref{eq:RSForm} is invalidated then $\exists \hat{\vx}\in\partial\Omega$ such that $F(\hat{\vx})=\hat{u}$ for some $\hat{u}\in\Omega_{u}$ and thus certainly $$\min\limits_{\norm{\vlambda}=1}\max\limits_{x\in X, u\in\Omega_u}\ \vlambda^T\left(F(x)-u\right) \geq \min\limits_{\norm{\vlambda}=1}\ \vlambda^T\left(F(\hat{\vx})-\hat{u}\right)=0.$$ The theorem now follows.
%\end{itemize}
\end{proof}
\end{thm}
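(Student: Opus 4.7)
The plan is to show that $z = 0$ forces the existence of a pair $(\hat{\vx}, \hat{\vu}) \in \partial\bar{\Omega} \times \Omega_u$ with $F(\hat{\vx}) = \hat{\vu}$, which negates \cref{eq:RSForm}. With that witness in hand, \cref{lem:NaScondition} immediately yields that the system is not robust solvable under the given uncertainty bounds, so the robustness margin must be at most $r$.

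The cornerstone of the argument is to rewrite the inner maximum by decoupling $\vx$ and $\vu$:
\[
\max_{\vx\in\bar{\Omega},\,\vu\in\Omega_u} \vlambda^T\bigl(F(\vx)-\vu\bigr) \;=\; \max_{\vx\in\bar{\Omega}} \vlambda^T F(\vx) \;-\; \min_{\vu\in\Omega_u}\vlambda^T\vu.
\]
So $z=0$ produces a unit direction $\hat{\vlambda}$ and a scalar $c$ with $\max_{\vx}\hat{\vlambda}^T F(\vx) = c = \min_{\vu}\hat{\vlambda}^T\vu$, i.e., the hyperplane $\{\vy:\hat{\vlambda}^T\vy = c\}$ weakly separates $F(\bar{\Omega})$ from $\Omega_u$. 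Because the standing assumption of the section gives a nominal interior solution $\hat{\vx}_0$ with $F(\hat{\vx}_0)=\vu^*\in F(\bar{\Omega})\cap\Omega_u$, the two sets intersect; every intersection point therefore lies on the separating hyperplane, placing $\vu^*$ in $\partial F(\bar{\Omega})\cap \Omega_u$.

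I then invoke the standing property $\partial F(\bar{\Omega}) = F(\partial\bar{\Omega})$, which the paper carries throughout in place of full injectivity, to pull $\vu^*$ back to some $\hat{\vx}\in\partial\bar{\Omega}$ with $F(\hat{\vx})=\vu^*$; setting $\hat{\vu}=\vu^*\in\Omega_u$ then delivers the required witness to negate \cref{eq:RSForm}. The main obstacle I anticipate is a subtle issue about which boundary is being extracted: a direct application of \cref{lem:BdOpt} to the combined map $G(\vx,\vu)=F(\vx)-\vu$ on the product $\bar{\Omega}\times\Omega_u$ only locates a boundary point of the product domain, which could perfectly well lie in $\Int(\bar{\Omega})\times\partial\Omega_u$ --- a configuration compatible with robust feasibility and therefore useless for invalidating \cref{eq:RSForm}. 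The support-function reformulation above circumvents this by isolating the $F(\bar{\Omega})$ side of the Minkowski difference, so that the $\partial\bar{\Omega}$ witness is unambiguously recovered through $\partial F(\bar{\Omega}) = F(\partial\bar{\Omega})$.
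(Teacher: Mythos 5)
Your proposal is correct, and it reaches the required witness by a genuinely different route than the paper. The paper's own proof is a terse appeal to \cref{lem:BdOpt}: it observes that the optimizer of the min--max problem satisfies $F(\hat{\vx})\in\partial F(\bar{\Omega})=F(\partial\bar{\Omega})$ and then simply asserts that some boundary $\vx$ with $F(\vx)\in\Omega_u$ exists, never explaining why the optimal value being zero forces the image of a boundary point to land inside $\Omega_u$ --- essentially the product-domain gap you flag at the end of your proposal. Your support-function decoupling supplies that missing link: $z=0$ produces a unit $\hat{\vlambda}$ weakly separating $F(\bar{\Omega})$ from $\Omega_u$, and the section's standing assumption that the forecasted system has a solution gives $\vu^*\in F(\bar{\Omega})\cap\Omega_u$, so $\vu^*$ lies on the separating hyperplane, maximizes $\hat{\vlambda}^T\vy$ over $F(\bar{\Omega})$, hence sits in $\partial F(\bar{\Omega})=F(\partial\bar{\Omega})$ and pulls back to a witness $\hat{\vx}\in\partial\bar{\Omega}$ negating \cref{eq:RSForm}; \cref{lem:NaScondition} then rules out robust solvability. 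What your route buys is completeness and an explicit accounting of where nominal feasibility and the hypothesis $\partial F(\bar{\Omega})=F(\partial\bar{\Omega})$ enter; what the paper's sketch (implicitly) trades for brevity is exactly that justification. Two details worth writing out: a maximizer of the nonzero linear functional $\hat{\vlambda}^T\vy$ over $F(\bar{\Omega})$ cannot be an interior point of that image (this is what places $\vu^*$ on the boundary), and the minimum over the unit sphere is attained by compactness of the sphere and of $\bar{\Omega}$, $\Omega_u$. Your closing step from non-solvability at the given error bounds to ``robustness margin at most $r$'' is handled with the same brevity as in the paper itself, so no objection there.
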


\bigskip
We can relax the computation suggested in \cref{thm:OPTfeasOut} by utilizing the same techniques as before, replacing $\vx\vx^T$ with a positive semidefinite matrix $X$, with the option of dropping the condition that $X$ be positive semidefinite.
This step transforms the computation from a semidefinite program to a linear or mixed integer program depending on how one deals with the constraint $\norm{\vlambda}=1$. 
In our tests we use the $\ell_1$ norm and introduce variables that capture the absolute value of each $\lambda_i$.
We obtain the following procedure.
%In our tests we iterate over all possible sign choices for each dimension of $\vlambda$ as there are only ten dimensions of variability in our applications. \\

\bigskip
\bigskip
\textit{Outer Bound Procedure}: Formulation
\begin{equation}\label{eq:OPTfeasOutRelaxa}
\begin{array}{rl}
 \ z &=\min\limits_{\norm{\vlambda}=1}\max\limits_{\vx} \, \vlambda^T\left(F(\vx)-\vu^* \right) \\
 \text{subject to } \ & A\vx\leq \vb \\
 	&\vb\vb^T-A\vx\vb^T-\vb(A\vx)^T+AXA^T\geq O \\
 	&X \text{ is symmetric.}
\end{array}
\end{equation}

\bigskip

%We can construct the dual of the inner maximal objective of \cref{eq:OPTfeasOutRelaxa} as follows: \\

%\textbf{Outer Bound Procedure B} 
%\begin{equation}\label{eq:OPTfeasOutRelaxb}
%\begin{array}{rl}
%\ z &=\min\limits_{\norm{\vlambda}=1,\vy}B^T\vy  \\
% \text{subject to: } \ & H^T\vy=\vu(\lambda) \\
% & \vy\leq 0
%\end{array}
%\end{equation}
%where the constraints of \cref{eq:OPTfeasOutRelaxa} can be written as $H\vx\geq B$ and the objective of \cref{eq:OPTfeasOutRelaxa} as $\vu(\lambda)^T\vx$. 
%Notice that the objective of \cref{eq:OPTfeasOutRelaxa} is different from that of \cref{thm:OPTfeasOut}. 
%The optimal value of the objectives of  \cref{eq:OPTfeasOutRelaxa} and \cref{eq:OPTfeasOutRelaxb} are much easier to find and they allow us to directly solve for the minimal outer bound approximation of the robustness margin. 

%Given the initial equation $$Q(\vx)+L\vx-\vu^* = \textbf{0}$$ with the constraint that $A\vx\leq \vb$ we wish to discover upper bounds on the robust margin of $\vu^*$. We define $Q(\vx)$ to be $[\vx^TQ_1\vx,\vx^TQ_2\vx,...,\vx^TQ_n\vx]^T$ relaxed to $[Tr\left(Q_1X\right),Tr\left(Q_2X\right),...,Tr\left(Q_nX\right)]^T$ where $X$ is a symmetric matrix. 
%\begin{align*}
%\min\limits_{||\lambda||=1}\max\limits_{\vx} & \lambda^T\left[Tr\left(QX\right)+L\vx-\vu^*\right]
%\intertext{\hspace*{0.75in}\textbf{Subject To:}} 
%(1) \ &A\vx\leq \vb \\
%(2) \ &\vb\vb^T-A\vx\vb^T-\vb(A\vx)^T+AXA^T\geq 0 \\
%(3) \ &X \text{ is symmetric, positive semidefinite}
%\end{align*}

In order to implement this procedure, we use linear programming duality to write the problem in \cref{eq:OPTfeasOutRelaxa} as direct minimization LP (in place of a min-max problem).
To construct the dual of the inner maximum objective function, we first write the constraints as $M\hat{\vx} \leq \vB$ and the objective as $g(Q,L,\lambda)\hat{\vx}-\lambda^T\vu^*$, where $\hat{\vx}^T=[\vx^T ~\hat{X}^T]$, with $\hat{X}$ being the vector form of the upper triangular (including diagonal) portion of $X$. 
We enforce the constraint that $X$ is symmetric by utilizing only the upper triangular portion of $X$.
We can write $M \hat{\vx} \leq \vB$ using only the upper triangular entries including the diagonal as follows.

\vspace*{0.3in}
%\noindent
%\scalebox{0.8}{
\noindent
\resizebox{\hsize}{!}{%
$
  \begin{bmatrix}
    A_{11} & \dots & A_{1n} & 0 & \dots & 0 \\ 
    %	A_{21} & A_{22} & \dots & A_{2n} & 0 & 0 & \dots & 0 \\  
    \vdots & \ddots & \vdots & \vdots & \ddots & \vdots\\
    A_{m1} & \dots & A_{mn} & 0 & \dots & 0 \\
    \vb_1A_{11}+\vb_1A_{11} & \dots & \vb_1A_{1n}+\vb_1A_{1n} & -A_{11}A_{11} & \dots & -A_{1n}A_{1n} \\ 
    %\vb_2A_{11}+\vb_1A_{21} & \vb_2A_{12}+\vb_1A_{22} & \dots & \vb_2A_{1n}+\vb_1A_{2n} & -A_{11}A_{21} & -A_{12}A_{21}-A_{11}A_{22} & \dots & -A_{1n}A_{2n} \\ 
    \vdots & \ddots & \vdots & \vdots & \ddots & \vdots\\
    \vb_rA_{q1}+\vb_qA_{r1} & \dots & \vb_rA_{qn}+\vb_qA_{rn} & -A_{q1}A_{r1} & \dots & -A_{1n}A_{2n} \\ 
    \vdots & \ddots & \vdots & \vdots & \ddots & \vdots\\
    \vb_mA_{m1}+\vb_mA_{m1} & \dots & \vb_mA_{mn}+\vb_mA_{mn} & -A_{m1}A_{m1} & \dots & -A_{mn}A_{mn} 
  \end{bmatrix}
  \begin{bmatrix}
    x_1 \\ 
    %\vx_2 \\  
    \vdots \\
    x_n \\
    X_{11} \\ 
    %X_{12}  \\
    \vdots \\
    X_{qr}\\
    %X_{23}\\
    \vdots \\
    X_{nn} 
  \end{bmatrix}%
  ~\leq~
  \begin{bmatrix}
    b_1 \\ 
    %b_2 \\  
    \vdots \\
    b_m \\
    b_1b_1 \\ 
    %b_1b_2  \\
    \vdots \\
    b_qb_r\\
    %b_2b_3\\
    \vdots \\
    b_mb_m 
  \end{bmatrix}.%
$
}%
%\bigskip
%

\vspace*{0.3in}
We can write the objective function as $g(Q,L,\lambda)\hat{\vx}-\lambda^T\vu^*$ in the following way.
%
%\bigskip
\[
%\resizebox{\hsize}{!}
%{
  \begin{bmatrix}
    \sum\limits_{j=1}^nL_{j,1}\lambda_j & %\sum\limits_{j=1}^nL_{j,2}\lambda_j &
    \dots &
    \sum\limits_{j=1}^nL_{j,n}\lambda_j &
    \sum\limits_{j=1}^n\lambda_jQ_{11}^j & % \sum\limits_{j=1}^n\lambda_j(Q_{12}^j+Q_{21}^j) &
    \dots &
    \sum\limits_{j=1}^n\lambda_jQ_{nn}^j 
  \end{bmatrix}
  \begin{bmatrix}
    x_1 \\ 
    %x_2 \\  
    \vdots \\
    x_n \\
    X_{11} \\ 
    %X_{12}  \\
    \vdots \\
    %X_{22}\\
    %X_{23}\\
    %\vdots \\
    X_{nn} 
  \end{bmatrix}%
  -
  \begin{bmatrix}
    \lambda_1 &  \dots & \lambda_n 
  \end{bmatrix}
  \begin{bmatrix}
    u^*_1 \\ 
    %u^*_2 \\  
    \vdots \\
    u^*_n \\
  \end{bmatrix}.%
%}
\]
If $\vu^*=\vzero$, then the objective function reduces to just $g(Q,L,\lambda)\hat{\vx}$.

\vspace*{0.3in}
On the other hand, if $\vu^*\neq \vzero$ then we can add an extra dummy variable $\vx_{n+1}$ with the constraint $\vx_{n+1}=1$, to obtain the following system.

\vspace*{0.3in}
%\vfill
%\[
\noindent
\resizebox{\hsize}{!}{%
  $
  \begin{bmatrix}
    A_{11} & \dots & A_{1n} & A_{1,n+1} & 0 & \dots & 0 \\ 
    %A_{21} & \dots & A_{2n} & A_{2,n+1} & 0 & \dots & 0 \\  
    \vdots & \ddots & \vdots & \p \vdots & \vdots & \ddots & \vdots\\
    A_{m1} & \dots & A_{mn} & A_{mn+1} & 0 & \dots & 0 \\
    0 & \dots & 0 & \p 1 & 0 & \dots & 0 \\
    0 & \dots & 0 & -1 & 0 & \dots & 0 \\
    \vb_1A_{11}+\vb_1A_{11} & \dots & \vb_1A_{1n}+\vb_1A_{1n} & \p 0 & -A_{11}A_{11} & \dots & -A_{1n}A_{1n} \\ 
    %\vb_2A_{11}+\vb_1A_{21} & \dots & \vb_2A_{1n}+\vb_1A_{2n} &  0 & -A_{11}A_{21} & \dots & -A_{1n}A_{2n} \\ 
    \vdots & \ddots & \vdots & \p \vdots & \vdots & \ddots & \vdots\\
    \vb_rA_{q1}+\vb_qA_{r1} & \dots & \vb_rA_{qn}+\vb_qA_{rn} & \p 0 & -A_{q1}A_{r1} & \dots & -A_{1n}A_{2n} \\ 
    \vdots & \ddots & \vdots & \p \vdots & \vdots & \ddots & \vdots\\
    \vb_mA_{m1}+\vb_mA_{m1} & \dots & \vb_mA_{mn}+\vb_mA_{mn} & \p 0 & -A_{m1}A_{m1} & \dots & -A_{mn}A_{mn} 
  \end{bmatrix}%
  \begin{bmatrix}
    x_1 \\ 
    %x_2 \\  
    \vdots \\
    x_n \\
    x_{n+1} \\
    X_{11} \\ 
    %X_{12}  \\
    \vdots \\
    X_{qr}\\
    %X_{23}\\
    \vdots \\
    X_{nn} 
  \end{bmatrix}%
  ~\leq~
  \begin{bmatrix}
    \p b_1 \\ 
    %b_2 \\  
    \p \vdots \\
    \p b_m \\
    \p 1 \\
    -1 \\
    \p b_1b_1 \\ 
    %b_1b_2  \\
    %\p \vdots \\
    %b_2b_2\\
    %b_2b_3\\
    \p \vdots \\
    b_qb_r\\
    \p \vdots \\
    b_mb_m 
  \end{bmatrix}.%
  $
}
%\]
%
We can then write the objective function $g(Q,L,\lambda)\hat{\vx}$ as follows.
\[
%\resizebox{0.9\hsize}{!}
%{
  \begin{bmatrix}
    \sum\limits_{j=1}^nL_{j,1}\lambda_j &
    \dots &
    \sum\limits_{j=1}^nL_{j,n}\lambda_j &
    \sum\limits_{j=1}^n-\lambda_j u^*_j &
    \sum\limits_{j=1}^n\lambda_jQ_{11}^j &
    %\sum\limits_{j=1}^n\lambda_j(Q_{12}^j+Q_{21}^j) &
    \dots &
    \sum\limits_{j=1}^n\lambda_jQ_{nn}^j 
  \end{bmatrix}
  \begin{bmatrix}
	x_1 \\ 
	%x_2 \\  
	\vdots \\
	x_n \\
	x_{n+1} \\
	X_{11} \\ 
	%X_{12}  \\
	\vdots \\
 	%X_{22}\\
	%X_{23}\\
	\vdots \\
	X_{nn} 
  \end{bmatrix}.%
%}
\] 

In either case, the final optimization problem, and hence the procedure we use in our tests, is given as follows.

\medskip
\textbf{Outer Bound Procedure} 
\begin{equation}\label{eq:OPTfeasOutRelaxb}
\begin{array}{rl}
  \ z &=\min\limits_{\norm{\vlambda}=1, \vy}\vB^T\vy  \\
  \vspace*{-0.15in} \\
 \text{subject to } \ & M^T\vy = g(Q,L,\lambda)^T \\
 & \vy \geq \vzero .
\end{array}
\end{equation}

%$$\min\limits_{||\lambda||=1, \vy}\vB^T\vy$$
%\begin{align*}
%\intertext{\textbf{Subject To:}}
%(1) \ & M^T\vy \geq g(Q,L,\lambda)^T \\ 
%(2) \ & \vy\geq 0\\
%\end{align*}

\section{Illustrative Example} \label{sec:expl}

We present a toy example to illustrate the lower bound feasibility and bound tightening procedures as well as the upper bound approximation method.
We take the problem data as follows.

\medskip
%\[
\noindent
\resizebox{\hsize}{!}{%
$
A=
\begin{bmatrix}
  -1 & \p 0  \\ 
\p 1 & \p 0  \\  
\p 0 & -1 \\
\p 0 & \p 1  
\end{bmatrix},%
~
\vb =
\begin{bmatrix}
  -0.5  \\ 
\p 3 \\  
  -0.5 \\
\p 3  
\end{bmatrix}, %
~
Q_1=
\begin{bmatrix}
  1 & 0  \\ 
  0 & 0  
\end{bmatrix},%
~
Q_2=
\begin{bmatrix}
  0 & 0  \\ 
  0 & 1  
\end{bmatrix},%
~
L=
\begin{bmatrix}
  1 & -3  \\ 
  2 & -1  
\end{bmatrix}, %
~\mbox{ and }
\vu^*=
\begin{bmatrix}
  -2  \\ 
\p 4   
\end{bmatrix}.%
$
}
%\]
%

\medskip
\noindent These components of data correspond to the following quadratic system.
\[
F(\vx) =
\begin{bmatrix}
  x_1^2 +   x_1 - 3x_2 \\
  x_2^2 +  2x_1 - x_2
\end{bmatrix}
=
\begin{bmatrix}
    -2 \\
    \p 4
\end{bmatrix}
\hspace*{0.15in}
\text{ with }
\hspace*{0.15in}
\Omega_{\vx} = \left\{ \vx ~\big\vert~
\begin{bmatrix}
  0.5 \\
  0.5
\end{bmatrix}
\leq 
\begin{bmatrix}
  x_1 \\
  x_2
\end{bmatrix}
\leq
\begin{bmatrix}
  3 \\
  3
\end{bmatrix}
\right\}.
\]
This system has a unique solution in $\Omega_{\vx}$ given by
\[
\vx =
\begin{bmatrix}
x_1 \\
x_2
\end{bmatrix}
\approxeq
\begin{bmatrix}
1.36 \\
1.74
\end{bmatrix}.
\] 
Running the Outer Bound Procedure (\cref{eq:OPTfeasOutRelaxb}) gives an upper bound on the robustness margin of $2.63462$.
Running the LP feasibility procedure results in an lower bound on the robustness margin of $1.20454$, while the bound tightening yields a lower bound of $1.706649$. 
We illustrate $F(\Omega_{\vx})$ with $\vu^*$ in \cref{fig:FOmega}.
Notice that the bound tightening procedure produces a better lower bound approximation as theoretically predicted.

\begin{figure}[htp!]
  \begin{center}
    \includegraphics[scale=0.5425]{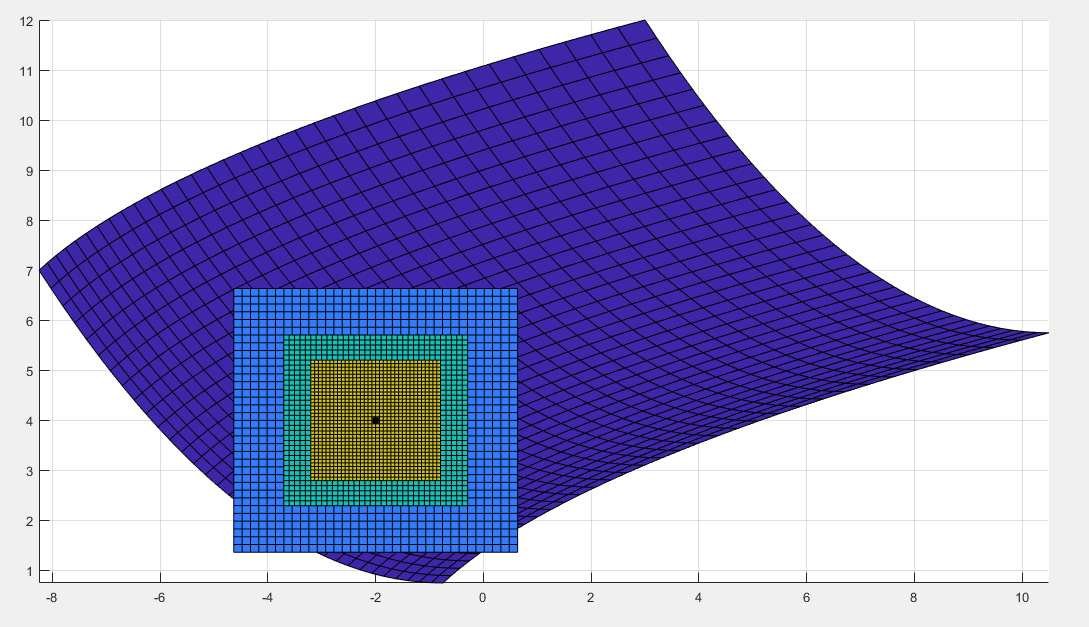} % {Rfeas}
  \end{center}
  \caption{\label{fig:FOmega}
    Illustration of $F(\Omega_x)$ (dark blue surface), $\vu^*$ (center of the boxes), and $\Omega_{u^*}$  with radii given by the upper bound procedure (light blue box), bound tightening procedure (turquoise box), and feasibility procedure (yellow box).
  }
%\vspace*{0.05in}
\end{figure}

\vspace*{-0.15in}
\section{Implementation on Power Systems} \label{sec:numstd}  
\vspace*{-0.05in}

%\subsection{Adaptation to Optimal Power Flow Equations}
We now present results from computational studies that demonstrate the efficiency of the bounding procedures we have introduced.
%presented, which address the aforementioned short comings of previous work. 
For all the numerical results presented, we apply the LP and MIP feasibility procedures (\cref{eq:OPTfeasrelaxLP1,eq:OPTfeasrelaxLP2}), the LP bound tightening procedure (\cref{eq:OPTfeasrelaxLP3}), and the Outer bound procedure (\cref{eq:OPTfeasOutRelaxb}) to find lower and upper bounds for the robustness margins with respect to the optimal power flow equations derived using datasets obtained from the MatPower package found in the MATLAB software \cite{matpower} and the NICTA Energy System Test Case Archive (NESTA) \cite{CoGoSc2019}.
We specifically show results for tests conducted on cases 5, 9, 14, 29, 30, 39 and 57. 
In every case the power flow equations were converted into the form of a system of quadratic equations we study (as described in \cref{eq:Quad,eq:xLimits,eq:uLimits}). 
To simulate real life scenarios we allowed the first 5 dimensions of $\vu$ to represent renewable energy, and the others representing no variation. We then slowly increased the variation of the first 5 dimensions of $\vu$, while utilizing procedures for the upper and lower bound verification to verify robust feasibility. 
We first detail specifically how this transformation was conducted.

%\clearpage
\vspace*{-0.10in}
\subsection{OPF to Quadratic System} \label{ssec:opf2qsys}

As described by Dvijotham et al.~\cite{DjTuritsyn}, the AC power flow equations can be written as follows.
\begin{equation}\label{eq:Real1}
	\begin{array}{rl}
	\Re\left(\sum\limits_{k=1}^n V_i\left(\overline{Y_{ik}V_k} + \overline{Y_{i0}V_0}\right)\right) &= p_i, \ \forall i\in PQ \\
	
	\Im\left(\sum\limits_{k=1}^n V_i\left(\overline{Y_{ik}V_k} + \overline{Y_{i0}V_0}\right)\right) &= q_i, \ \forall i\in PQ \\
	
	\Re\left(\sum\limits_{k=1}^n V_i\left(\overline{Y_{ik}V_k} + \overline{Y_{i0}V_0}\right)\right) &= p_i, \ \forall i\in PV \\ 
	
	|V_i|^2 &= v_i, \  \forall i \in PV,
	\end{array}
\end{equation}
%
%\begin{align*}\label{eq:Real1}
%& \Re\left(\sum\limits_{k=1}^n V_i\left(\overline{Y_{ik}V_k} + \overline{Y_{i0}V_0}\right)\right) = p_i \ \forall i\in PQ \\ \nonumber
%& \Im\left(\sum\limits_{k=1}^n V_i\left(\overline{Y_{ik}V_k} + \overline{Y_{i0}V_0}\right)\right) = q_i \ \forall i\in PQ \\ \nonumber
%& \Re\left(\sum\limits_{k=1}^n V_i\left(\overline{Y_{ik}V_k} + \overline{Y_{i0}V_0}\right)\right) = p_i \ \forall i\in PV \\ \nonumber
%& |V_i|^2 = v_i, \  \ \forall i \in PV 
%\end{align*}
%%
where $V_i$ denotes the complex voltage phasor, $p_i$ the active and $q_i$ the reactive power injection, and $Y$ the admittance matrix at node $i$.
$PV$ denotes the set of PV or \emph{Generator} nodes/buses, $PQ$ denotes the set of PQ or \emph{Load} buses, and $v_i$ denotes the squared voltage magnitude setpoints at the $PV$ buses.
We can then rewrite \cref{eq:Real1} into the system outlined in \cref{eq:Quad,eq:xLimits,eq:uLimits} by setting
\[
\begin{array}{rl}
\vx= & \begin{bmatrix} \Re(V_1) & \dots & \Re(V_n) & \Im(V_1) & \dots  &  \Im(V_n) \end{bmatrix}^T ~~\mbox{and} \\
\vu=& \begin{bmatrix} p_1 &  \dots &  p_n &  q_1 &  \dots &  q_n &  v_1 &  \dots &  v_n \end{bmatrix}^T.
\end{array}
\]

\subsection{Computational Results} \label{ssec:compres}

The procedures were computed with $A \vx \leq \vb = B \vone$ for $B \in\{0.001, 0.005,0.01\}$.
$A$ in these cases is a matrix such that $A \vx \leq \vb$ controls the flow between nodes in the power grid, i.e., each row of $A\vx \leq \vb$ has the form $x_i-x_j\leq b_k$. 
%All computations were performed on a laptop running the 64bit Windows 10 operating system containing an Intel Core I7 processor, 16 GB RAM, and 4 cores. 
All computations were performed on a laptop running the  64bit MacOS Catalina operating system containing an 2.3GHz dual-core Intel Core i5, Turbo Boost up to 3.6GHz, with 64MB of eDRAM.
Details on the computation are given in \cref{tab:exp}, for determining the practical scaling properties of these procedures with the fixed $B=0.001$.
We display the data on a case by case basis to highlight the effect of allowing more fluctuation between the nodes, i.e., as $B$ increases, in Figure \ref{fig:Graphs1}. 
 
\begin{table}[!htbp]
  \resizebox{\textwidth}{16mm}{        
    \centering
    \begin{tabular}{|c|c|c|c|c|c|c|c|c|c|c|c|c|}
      \hline
      \multirow{2}*{Case \#}& 
      \multicolumn{3}{c|}{BdTgtLower}&
      \multicolumn{3}{c|}{MIPLower}&
      \multicolumn{3}{c|}{LPLower}&
      \multicolumn{3}{c|}{LPUpper}
      \cr\cline{2-13}
      &Time(s)& Var &Cons\#&Time(s)& Var\# &Cons\#&Time(s)& Var\# &Cons\#&Time(s)& Var\# &Cons\#
      \cr\hline
      5 &10.91 &44 &616 & 0.32 &69 &641 &0.43 &44 &616
      &0.52&615 &88
      \cr\hline
      9 &35.82 &152 &1364 &6.88 &189 &1401 &2.82 &152 &1364 & 2.95 &1347 &288
      \cr\hline
      14 &743.11 &377 &6532 &93.31 &458 & 6613 &468.55 &377 &6532&35.73 &6495 &718
      \cr\hline
      29 & 9436.28 & 1536 & 25477 & 3164.68 & 1703 & 25816 & 9236.42 & 1536 & 25477 & 483.72 & 24377 & 3126
      \cr\hline
      30 & 9888.07 & 1769 & 27176 & 3674.53 & 1934  & 27341 & 9484.48 & 1769 & 27176 & 531.31 & 27075 & 3438
      \cr\hline
      39 & 21764.43 & 3248 & 39265 & 9436.08 & 3771 & 41026 & 18329.58 & 3248 & 39265 & 1442.36 & 41342 & 4855
      \cr\hline
  \end{tabular}}
  \caption{  \label{tab:exp}
    Solution times, number of variables, and number of constraints for the procedures we considered on the Cases 5, 9, 14, 29, 30, and 39.
    }
    
\end{table}

\medskip
\begin{figure}[htp!]
\begin{center}
  \includegraphics[width=.45\linewidth-0.2mm]{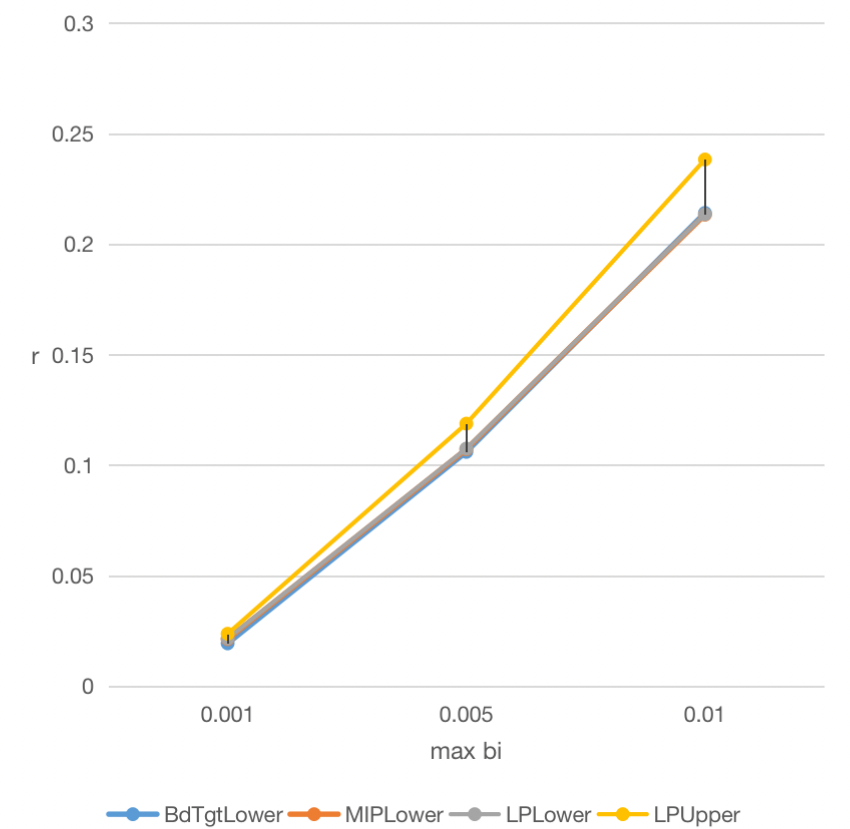}\hfill
  \includegraphics[width=.45\linewidth-0.2mm]{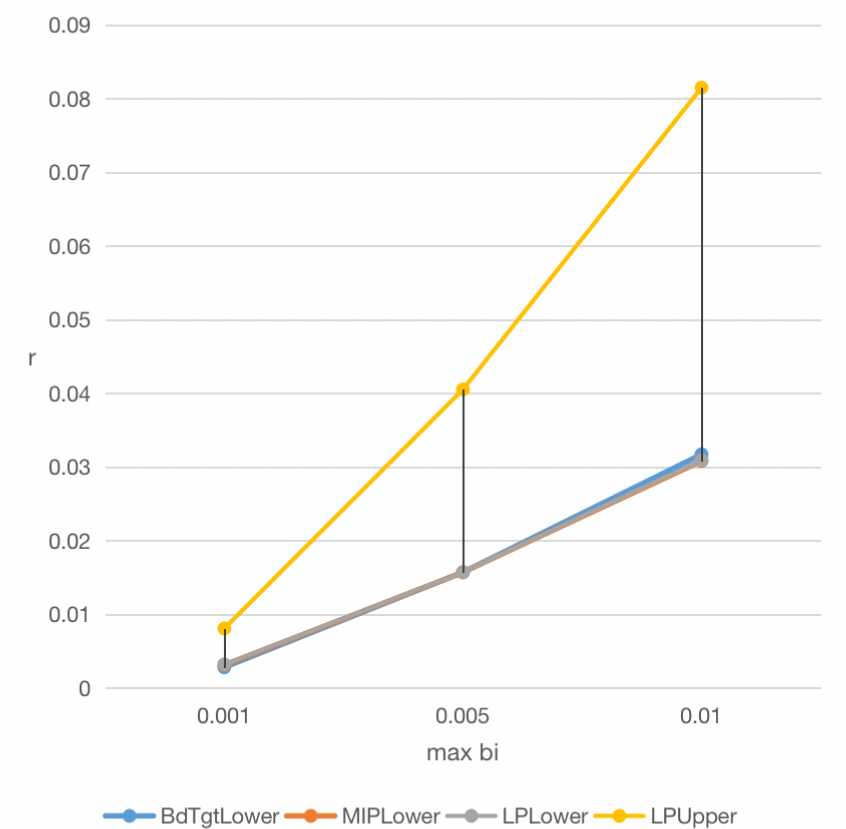}\\[0.5mm]
  \includegraphics[width=.45\linewidth-0.2mm]{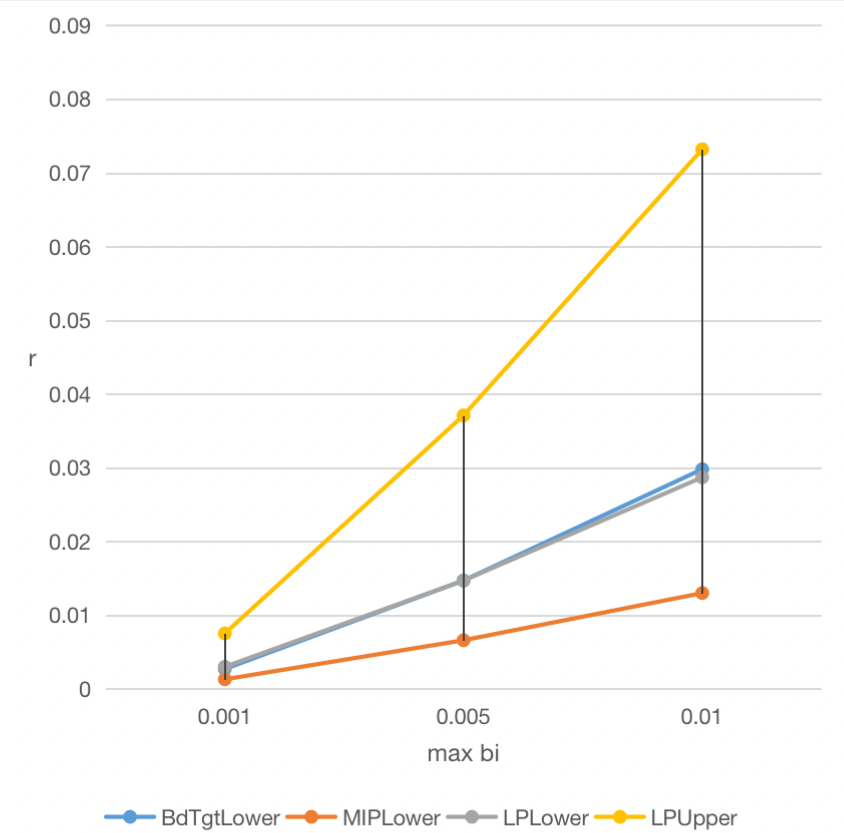}\hfill
  \includegraphics[width=.45\linewidth-0.2mm]{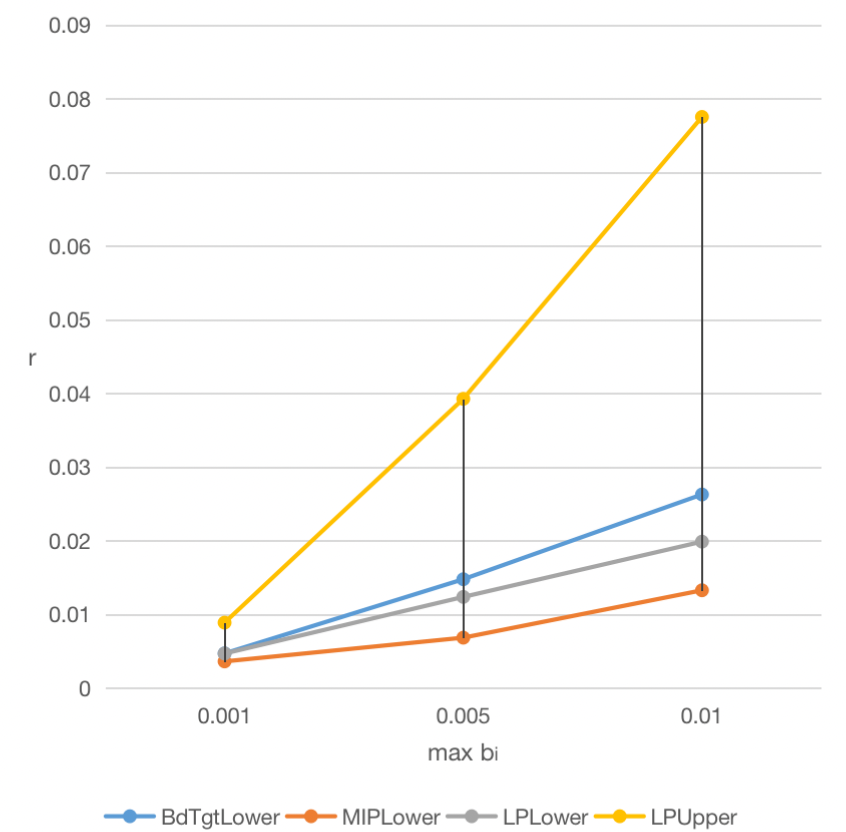}\\[0.5mm]
  \includegraphics[width=.45\linewidth-0.2mm]{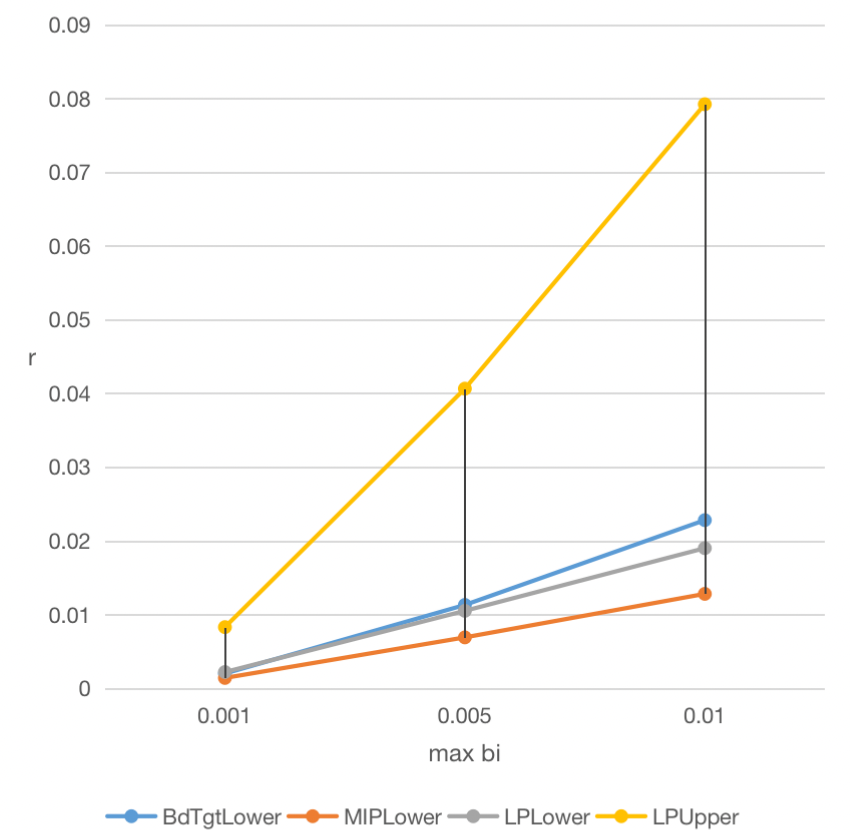}\hfill
  \includegraphics[width=.45\linewidth-0.2mm]{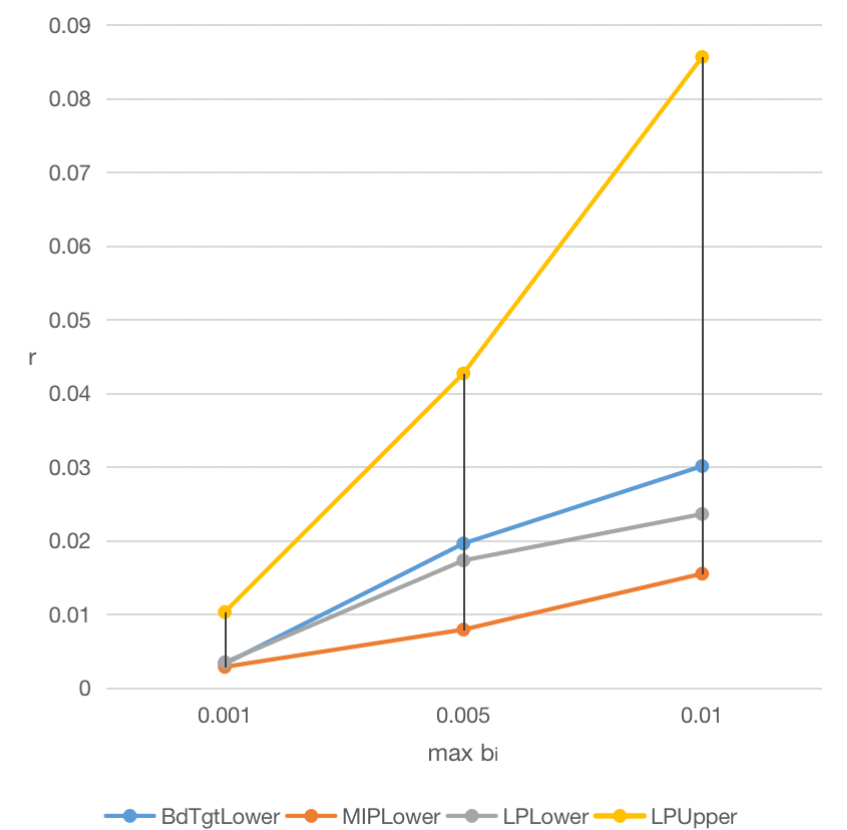}\\
\caption{Lower and upper bound estimates on the robustness margin for Case 5 (Top Left), Case 9 (Top Right), Case 14 (Middle Left), Case 29 (Middle Right), Case 30 (Bottom left), and Case 39 (Bottom right). }
\label{fig:Graphs1} 
\end{center}
\end{figure}

Case 57 is not included in the table and graphs as the only inner bound procedure to run in a reasonable time was the Feasibility procedure that produced a maximum robustness margin of $\approx 0.003$ with a computational time of $\approx 48924$ seconds, and a gap to the outer bound procedure of $\approx 0.030$ when $B=0.001$, which had a running time of $\approx 27080$ seconds.

As evident from the graphs, the LP Bound Tightening Procedure produces a better approximation of the lower bound on the robustness margin as the complexity of the data set increases. 
The outer bound procedure (in \cref{eq:OPTfeasOutRelaxb}) is derived using looser relaxations of the original problem compared to any of those used to derive lower bounds (in \cref{eq:OPTfeasrelaxLP1,eq:OPTfeasrelaxLP2,eq:OPTfeasrelaxLP3}).
  Hence we expect that the upper bound produced is not as tight as the lower bounds.
Certainly one would expect the bound tightening procedure to out perform the other inner bound procedures for all cases, but the choice of procedure parameters has a big effect on the efficiency and capability of the procedure. 
For instance, setting a low tolerance for a minimal sufficient change in the dimensions of $\vb$ will result in a better lower bound approximation, but an extremely long running time for most cases. 
Thus in the low as well as marginally high complexity cases, it should be expected that the other procedures will out perform the bound tightening procedure as these manually set parameters will have more of an impact.

\vspace*{-0.1in}
\section{Discussion} \label{sec:disc}

%\delete{Need to add mode details...}

We have proposed novel and efficient techniques for determining robust solvability of quadratic systems with uncertainty. 
It is worth mentioning that the same machinery can be used to determine robustness margins of solutions to static systems by taking $A$ to be the identity, $e_i=0 \ \forall i$, and simply adjusting $\vb$ accordingly.

We have employed results on the computational complexity of QCQPs to shed some light on the hardness of the optimization problems in the key \cref{thm:RobFeas}.
It would be interesting to prove NP-hardness of the optimization problems in \cref{eq:OPTfeas} using direct arguments.

Our implementation on power systems in \cref{sec:numstd} is just one direct application of the general (theoretical) framework we have developed.
We are exploring other avenues for applying our framework including gas and water flow networks.

In the context of our application to power systems, we must point out that finding the robustness margin of an OPF instance is inherently harder than solving the original OPF instance itself.
Hence it is expected that larger sized instances of the default OPF problem are solved in practice than ones for which our inner and outer bound procedures are run efficiently.

We have presented (in \cref{ssec:compres}) bounds on the robustness margins for solutions of OPF instances.
One generalization we could consider is that of combining measures of robustness and optimality.
In practice, a near optimal solution with a large robustness margin might be more desirable than an optimal solution with a small robustness margin.

%More efficient models can certainly be derived using the theory contained here. 
%We present these models only as examples of how efficient algorithms could be constructed utilizing the theoretical results we presented. 

\vspace*{-0.1in}
\section*{Acknowledgment}

Krishnamoorthy, Luo, and Rapone acknowledge funding from the National Science Foundation through grants 1661348 and 1819229.
Dvijotham and Rapone were supported by the Pacific Northwest National Laboratory (PNNL) Center for Complex Systems Initiative while working on this project. 

\section*{Data Availability}

The datasets analyzed in the current study are available in the MATPOWER repository \cite{matpower} at \href{https://matpower.org/}{https://matpower.org/} and the NESTA repository \cite{CoGoSc2019}, the instances of which are available as part of the Library of IEEE PES Power Grid Benchmark repository at \href{https://github.com/power-grid-lib/}{https://github.com/power-grid-lib/}.
No new datasets were created as part of this study.

%\clearpage
%\bibliographystyle{plain}
%\bibliography{Ref_RobFeas}
\input{QuadRobustArticle.bbltex}

\end{document}